\documentclass[review,12pt]{elsarticle}
\usepackage{amsmath,amssymb,amsthm}
\usepackage{graphicx}
\usepackage{booktabs}
\usepackage{algorithm}
\usepackage{algpseudocode}
\usepackage[colorlinks=true,linkcolor=black,citecolor=black,urlcolor=blue]{hyperref}

\newtheorem{theorem}{Theorem}
\newtheorem{lemma}{Lemma}
\newtheorem{proposition}{Proposition}
\newtheorem{corollary}{Corollary}
\newtheorem{definition}{Definition}
\theoremstyle{remark}
\newtheorem{remark}{Remark}
\newtheorem{example}{Example}

\newcommand{\Z}{\mathbb{Z}}
\newcommand{\F}{\mathbb{F}}
\newcommand{\Cay}{\mathrm{Cay}}
\newcommand{\diam}{\mathrm{diam}}
\newcommand{\rk}{\mathrm{rank}}

\journal{European Journal of Combinatorics}

\begin{document}
\begin{frontmatter}

\title{Minimal Isometric Embeddings of Graphs into Cayley Graphs of Finite
Abelian Groups}

\author[lesia]{Rigobert Fokam Souop\corref{cor}}
\ead{fokamrigobert@gmail.com}
\author[lesia,saiam]{Laurent Bitjoka}
\cortext[cor]{Corresponding author.}
\address[lesia]{Laboratory of Energy, Signal, Imaging and Automation (LESIA),
University of Ngaound\'er\'e, Ngaound\'er\'e, Cameroon}
\address[saiam]{Laboratory of Scientific Artificial Intelligence and Applied
Mathematics, University of Ngaound\'er\'e, Ngaound\'er\'e, Cameroon}

\begin{abstract}
We study when, and how compactly, a finite connected graph $G$ embeds
isometrically into a Cayley graph of a finite abelian group. The classical
theory of partial cubes answers this question for isometric subgraphs of
hypercubes through the Djokovi\'c--Winkler relation $\theta$; we extend the
question to the full family of abelian Cayley graphs, whose hosts may carry
composite generators and cyclic factors of any order. We introduce an
involutive edge relation $\varphi$, defined by two simultaneous distance
equalities, which coincides with $\theta$ exactly on partial cubes and remains
informative beyond them, together with an oriented relation $\Phi$ for
non-involutive hosts, where generator classes are constrained to be partial
permutations rather than matchings. The central result is a quotient labeling
theorem: for any partition of the edge set into candidate generator classes,
the most generic consistent vertex labeling is the quotient of the free module
on the classes by the lattice of signed cycle--class incidences, computed by
the Smith normal form; the binary case is its reduction modulo two. We prove
that the finest partition always yields an isometric labeling (the Join
Lemmas), that compactifying the resulting universal group is itself an
instance of the same quotient construction---with a sufficient diagonal-fold
criterion, and with non-diagonal sublattices sometimes necessary---and that
the whole construction is algorithmic and certifiable. Worked examples,
carried out in full, include the triangle, the Petersen graph (a certified
embedding into the Clebsch graph of order $16$), the Pappus graph (order
$128$, a $1024$-fold compaction), and the diamond (the order-$6$ octahedron
via a non-diagonal fold). Sharp dimension bounds and an exhaustive census of
small graphs are developed in a companion paper.
\end{abstract}

\begin{keyword}
isometric embedding \sep Cayley graph \sep abelian group \sep partial cube
\sep Djokovi\'c--Winkler relation \sep Smith normal form \sep graph metric
\MSC[2020] 05C12 \sep 05C25 \sep 20K01 \sep 05C50
\end{keyword}
\end{frontmatter}

\section{Introduction}
\label{sec:intro}

A graph $G$ is a \emph{partial cube} if it embeds isometrically into a
hypercube $Q_p=\Cay(\Z_2^p,\{e_1,\dots,e_p\})$, i.e.\ if its vertices admit
binary labels under which graph distance equals Hamming distance. The
question of which graphs admit such labelings is one of the oldest in metric
graph theory: it was raised by Firsov \cite{firsov1965}, and acquired its
applied motivation in the addressing problem of Graham and Pollak
\cite{graham-pollak1971}, whose squashed-cube conjecture was proved by
Winkler \cite{winkler1983}. The structure theory of partial cubes themselves
is classical and complete: Djokovi\'c \cite{djokovic1973} and Winkler
\cite{winkler1984} characterized them through an edge relation $\theta$,
declaring $e=\{u,v\}$ and $f=\{x,y\}$ related when
$d(u,x)+d(v,y)\ne d(u,y)+d(v,x)$, and the resulting coordinatization
underlies isometric dimension, media theory, and a substantial literature in
metric graph theory; see Ovchinnikov's survey \cite{ovchinnikov2008}, the
monographs on graph products \cite{imrich-klavzar2000,hammack2011,ikr2008},
and, for the broader metric context, the survey of Bandelt and Chepoi
\cite{bandelt-chepoi2008} together with the structure theory of metrically
defined graph classes \cite{bandelt1984,bandelt-mulder1986}.

Three further strands of that literature frame the present work. First,
isometric embeddings into \emph{Hamming graphs}---Cartesian products of
complete graphs---were characterized by Winkler \cite{winkler1984} and
Wilkeit \cite{wilkeit1990}, with recognition subsequently brought to near
optimal complexity \cite{aurenhammer1995,imrich-klavzar1996}. Second, the
\emph{canonical isometric embedding} of Graham and Winkler
\cite{graham-winkler1985} represents an arbitrary connected graph in a
Cartesian product of quotient graphs, and Eppstein's \emph{lattice dimension}
\cite{eppstein2005} minimizes integer-lattice representations. Third, the
\emph{scale} and $\ell_1$ theory of Shpectorov \cite{shpectorov1993} and
Deza--Shpectorov \cite{deza-shpectorov1996}, systematized in the monograph of
Deza and Laurent \cite{deza-laurent1997} (see also \cite{laurent1994}),
relaxes isometry to scale-isometry. The subject remains active: recent work
treats Hamming embeddings of weighted graphs \cite{berleant2023,sheridan2021}
and binary stretch embeddings \cite{ebrahimi2025}.

Not every graph is a partial cube---odd cycles, the complete graphs $K_n$ for
$n\ge 3$, and the Petersen graph are not---and for such graphs the hypercube
is simply the wrong host. All the hosts above, however, are special
\emph{abelian Cayley graphs}: the hypercube is
$\Cay(\Z_2^p,\{e_1,\dots,e_p\})$, and Hamming graphs are products of complete
circulants. This paper asks the natural broader question:

\begin{quote}
\emph{Into which Cayley graph of a finite abelian group does a given
connected graph embed isometrically, and how small can that host be made?}
\end{quote}

The family of abelian Cayley graphs is far richer than the family of
hypercubes. It contains the cycles $C_n=\Cay(\Z_n,\{\pm 1\})$, tori and grids,
circulants, cocktail-party and Hamming graphs, and the Clebsch graph
$\Cay(\Z_2^4,S)$ with a weight-four composite generator. Allowing
\emph{composite} generators (group elements that are sums of several basic
directions) and higher-order cyclic factors $\Z_m$ makes isometric hosts
available for \emph{every} connected graph, and often makes them dramatically
smaller than the naive bound: the triangle $K_3$, not a partial cube, is
$\Cay(\Z_3,\{1,2\})$ itself; the Petersen graph embeds isometrically into the
Clebsch graph of order $16$, far below its $1024$-vertex naive binary host.

\paragraph{What is standard here and what is new}
We state this plainly, because the two ingredients of our construction have
very different pedigrees. The algebraic engine---given a partition of the
edges, the universal consistent labeling is the cokernel of the (signed)
cycle--class incidence map, computed by the Smith normal form---is a
classical computation: it is the determination of the first homology of the
graph with coefficients twisted by the partition, and the Smith normal form
is the standard tool for such cokernels (see Stanley's survey
\cite{stanley2016} for its combinatorial uses, and
\cite[Ch.~14]{godsil-royle2001} for the cycle space). We claim no novelty for
the engine and Section~\ref{sec:quotient} says so explicitly
(Remark~\ref{rem:homology}). What we believe is new is the \emph{metric}
theory built on top of it: the relation $\varphi$
(Definition~\ref{def:phi}), defined by two simultaneous distance equalities,
which coincides with $\theta$ exactly on partial cubes
(Theorem~\ref{thm:phitheta}) and, unlike $\theta$, remains a useful
compaction guide beyond them; the observation that in a non-involutive host
generator classes are partial permutations rather than matchings
(Proposition~\ref{prop:partialperm}), which repairs a natural but false
generalization of the partial-cube paradigm; the Join Lemmas
(Lemmas~\ref{lem:join2} and~\ref{lem:joinZ}), which prove that the finest
partition is always isometric and thereby convert the algebraic engine into
an embedding machine with a universal guarantee; the analysis of
compactification as a second application of the same quotient construction,
including a sufficient diagonal-fold criterion (Theorem~\ref{thm:fold}) and
the fact---witnessed by the diamond and verified exhaustively---that
non-diagonal sublattices are sometimes necessary (Theorem~\ref{thm:sublattice});
and the certified algorithmic pipeline of Section~\ref{sec:algorithm}.

\paragraph{Relation to the Graham--Winkler canonical embedding}
The canonical embedding \cite{graham-winkler1985} represents $G$ in a product
of quotient graphs indexed by the $\theta^\ast$-classes (the transitive
closure of $\theta$); it is universal among isometric embeddings into
Cartesian products. Our construction can be read as a group-structured
analogue: where Graham--Winkler quotient by $\theta^\ast$-classes and take an
unrestricted product, we partition edges into candidate \emph{generator}
classes and take the universal \emph{abelian group} compatible with the cycle
structure. The two constructions coincide in spirit on partial cubes, where
$\varphi=\theta$ and the coordinates are cuts; beyond partial cubes they
diverge, since a group host imposes relations (torsion, composite generators)
that a free product does not. A precise comparison theorem is an interesting
open direction; we do not attempt it here.

\paragraph{Contributions and organization}
Section~\ref{sec:prelim} fixes notation. Section~\ref{sec:relations} develops
the two relations: the involutive $\varphi$ with its properties and the proof
that it coincides with $\theta$ on partial cubes (Theorem~\ref{thm:phitheta},
both directions in full), and the oriented $\Phi$ with the
partial-permutation constraint (Proposition~\ref{prop:partialperm}).
Section~\ref{sec:quotient} proves the quotient labeling theorems: the cocycle
conditions (Theorems~\ref{thm:cocycle2} and~\ref{thm:cocycleZ}), the binary
and general quotient theorems (Theorems~\ref{thm:quotient2}
and~\ref{thm:quotientZ}), and the Join Lemmas, whose binary case we prove via
a geodesic-independence argument that also repairs a gap in an earlier draft
of this work. Section~\ref{sec:compact} treats compactification: the
sufficient diagonal-fold criterion (Theorem~\ref{thm:fold}), the necessity of
non-diagonal sublattices (Theorem~\ref{thm:sublattice}, with the diamond data
verified exhaustively and reported in full), and the resulting existence
statement (Theorem~\ref{thm:existence}), which we frame as the classical
spanning-tree baseline rather than as a new result.
Section~\ref{sec:algorithm} presents the algorithm in the body of the paper,
with its design rationale and a candid complexity analysis
(Theorem~\ref{thm:complexity} and the discussion following it): the
certification step is exponential in the binary dimension in the worst case,
and we say so. Section~\ref{sec:examples} works four examples in full,
displaying the actual $\varphi$-classes and cycle--class parity matrices for
the Petersen and Pappus graphs so that every number is reproducible.
Section~\ref{sec:conclusion} concludes.

Sharp lower bounds and optimality---including the injectivity bound that
makes the Petersen embedding optimal and the exact star dimension---together
with an exhaustive census of all $995$ connected graphs on at most seven
vertices \cite{read-wilson1998}, are developed in the companion paper \cite{fokam-p2}, of which the
present construction is the foundation. Our motivation is partly applied:
isometric abelian hosts equip a graph with the harmonic analysis of a finite
abelian group, connecting graph signal processing
\cite{shuman2013,sandryhaila-moura2013,ortega2018} and spectral methods
\cite{chung1997,hammond2011} to classical Fourier theory; that direction is
developed in two further companion papers \cite{fokam-p3,fokam-p4} and in the
first author's dissertation \cite{fokam-diss}.

\section{Preliminaries}
\label{sec:prelim}

Throughout, $G=(V,E)$ is a finite connected graph with $n=|V|$, $m=|E|$,
shortest-path metric $d=d_G$, and diameter $\diam(G)$. We write $c=m-n+1$ for
the cycle rank (first Betti number) of $G$. A finite abelian group is denoted
$\Gamma$; its identity is $0$.

\begin{definition}[Cayley graph]\label{def:cayley}
For a finite abelian group $\Gamma$ and a generating set
$S\subseteq\Gamma\setminus\{0\}$ with $S=-S$, the Cayley graph
$\Cay(\Gamma,S)$ has vertex set $\Gamma$ and an edge $\{g,g+s\}$ for each
$g\in\Gamma$ and $s\in S$. It is connected (as $S$ generates),
vertex-transitive, and its graph metric is the word metric, translation
invariant: $d_\Gamma(x,y)=d_\Gamma(0,y-x)=|y-x|_S$, the least number of
generators summing to $y-x$.
\end{definition}

\begin{definition}[Isometric embedding]\label{def:isometric}
An isometric embedding of $G$ into $\Cay(\Gamma,S)$ is an injection
$\phi\colon V\to\Gamma$ with $d_\Gamma(\phi(u),\phi(v))=d_G(u,v)$ for all
$u,v\in V$. An edge $\{u,v\}$ then satisfies $\phi(v)-\phi(u)=\pm s$ for some
$s\in S$; we say the edge \emph{carries} the generator $s$. We call
$N=|\Gamma|$ the host order and seek to minimize it.
\end{definition}

The \emph{naive construction} roots a spanning tree at $r$, gives each of the
$n-1$ tree edges its own coordinate in $\Z_2^{n-1}$, and labels $v$ by the
sum of coordinates along the tree path $r\to v$; non-tree edges receive
composite labels. This construction is classical (it is implicit already in
the earliest work on Boolean-cube embeddings \cite{firsov1965}) and is always
isometric---we re-derive it as the finest binary quotient in
Corollary~\ref{cor:naive}---but wasteful: a host of order $2^{n-1}$. The
point of the theory is to merge edges into shared generators wherever the
metric permits, shrinking $k$ (hence $N=2^k$ in the binary case) or replacing
binary directions by cyclic factors $\Z_m$.

\section{Two relations generalizing Djokovi\'c--Winkler}
\label{sec:relations}

\subsection{The involutive relation $\varphi$}

\begin{definition}[$\varphi$ relation]\label{def:phi}
For edges $e=\{u,v\}$ and $f=\{x,y\}$ of $G$, write $e\;\varphi\;f$ if
\[
d(u,x)=d(v,y)\quad\text{and}\quad d(u,y)=d(v,x).
\]
\end{definition}

The relation captures \emph{metric parallelism}: the four inter-endpoint
distances obey the symmetry of opposite sides of a parallelogram. Edges
carrying the same involutive generator in a binary Cayley graph satisfy
precisely this symmetry (Theorem~\ref{thm:phitheta} below makes this exact on
partial cubes).

\begin{remark}[Position with respect to known relations]\label{rem:phiposition}
The Djokovi\'c--Winkler relation declares $e\;\theta\;f$ when
$d(u,x)+d(v,y)\ne d(u,y)+d(v,x)$; $\varphi$ instead demands the two
\emph{simultaneous equalities} of Definition~\ref{def:phi}, which is neither
implied by nor implies $\theta$ in general. Distance-equality conditions
between edge pairs pervade the literature on isometric embeddings into
products \cite{wilkeit1990,bandelt-chepoi2008}, and we make no claim of
priority for the general idea of comparing endpoint distances; we have not,
however, found the specific conjunction of Definition~\ref{def:phi} studied
under another name, and we would welcome a pointer. What we do claim is its
behaviour: Theorem~\ref{thm:phitheta} shows $\varphi$ recovers $\theta$
exactly on partial cubes, while beyond partial cubes---where $\theta$
degenerates (on the Petersen graph the whole edge set is a single
$\theta$-class)---$\varphi$ retains fine structure (five classes of size
three on Petersen) that drives compact embeddings. We are also explicit about
its limitation: $\varphi$ is not transitive
(Theorem~\ref{thm:phiprops}(ii)), so it cannot by itself define a
coordinatization; it is a \emph{guide} for proposing generator classes, and
the exact consistency burden is carried by the quotient machinery of
Section~\ref{sec:quotient}. Section~\ref{sec:examples} quantifies the point:
on the Pappus graph only $2$ of the $15$ partitions of $E$ into
pairwise-$\varphi$ triples are isometric, so $\varphi$-compatibility is
genuinely weaker than embeddability, and certification cannot be dispensed
with.
\end{remark}

\begin{lemma}[Well-definedness]\label{lem:welldef}
Definition~\ref{def:phi} is independent of the chosen orientations of $e,f$.
\end{lemma}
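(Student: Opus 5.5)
We check directly that the defining condition of Definition~\ref{def:phi} is unchanged under each of the possible reorientations. Edges are unordered pairs, so the relation must not depend on which endpoint of $e$ is called $u$, nor on which endpoint of $f$ is called $x$. The group generated by the two swaps $u\leftrightarrow v$ and $x\leftrightarrow y$ is the Klein four-group. It therefore suffices to verify invariance under each swap separately; the simultaneous swap is then their composite.

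Write the condition as the pair of equalities $d(u,x)=d(v,y)$ and $d(u,y)=d(v,x)$.

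\emph{Swapping $x$ and $y$.} The pair becomes $d(u,y)=d(v,x)$ and $d(u,x)=d(v,y)$. This is the same two equalities listed in the opposite order, so the conjunction is unchanged.

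\emph{Swapping $u$ and $v$.} The pair becomes $d(v,x)=d(u,y)$ and $d(v,y)=d(u,x)$. By symmetry of the metric $d$ and of equality, these are again the original two equalities in the opposite order.

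It follows that the truth value of $e\;\varphi\;f$ depends only on the unordered pairs $\{u,v\}$ and $\{x,y\}$. In the same way, exchanging the roles of $e$ and $f$ (mapping $(u,v,x,y)\mapsto(x,y,u,v)$) gives $d(x,u)=d(y,v)$ and $d(x,v)=d(y,u)$, which are the original equalities by symmetry of $d$. So $\varphi$ is also a symmetric relation on $E$, although the statement does not strictly require this.

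No step is a real obstacle; the argument is a routine bookkeeping check. The only point needing care is to observe that each swap permutes the two equalities rather than altering either of them. This is exactly what distinguishes the condition from an orientation-sensitive one, such as $d(u,x)=d(v,y)$ alone, which fails to be well defined.
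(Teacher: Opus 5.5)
Your proof is correct and is essentially the paper's argument: each of the swaps $u\leftrightarrow v$ and $x\leftrightarrow y$ only exchanges the two defining equalities, so their conjunction holds for all four orientation choices or for none. Your additional remark on symmetry under exchanging $e$ and $f$ is the symmetry part of Theorem~\ref{thm:phiprops}(i), which the lemma itself does not require.
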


\begin{proof}
Swapping $x\leftrightarrow y$ exchanges the two required equalities with each
other, preserving their conjunction; swapping $u\leftrightarrow v$ likewise.
All four orientation choices impose the same pair of conditions.
\end{proof}

\begin{theorem}[Properties of $\varphi$]\label{thm:phiprops}
For any connected graph $G$:
\begin{itemize}
\item[(i)] $\varphi$ is reflexive and symmetric;
\item[(ii)] $\varphi$ is in general not transitive;
\item[(iii)] incident edges are never $\varphi$-related; hence every set of
pairwise $\varphi$-related edges is a matching;
\item[(iv)] two distinct edges on a common shortest path are never
$\varphi$-related;
\item[(v)] in the even cycle $C_{2\nu}$ the maximal pairwise-$\varphi$ sets
are exactly the $\nu$ antipodal pairs;
\item[(vi)] in the odd cycle $C_{2\nu+1}$ all $\varphi$-classes are
singletons.
\end{itemize}
\end{theorem}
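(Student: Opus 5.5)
We prove the six items separately. Each follows directly from Definition~\ref{def:phi} together with Lemma~\ref{lem:welldef}. That lemma lets us check a single convenient orientation of the two edges: if the required pair of equalities fails for one orientation, it fails for all four.

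For (i), reflexivity comes from taking $f=e$ with $x=u$, $y=v$. The conditions become $d(u,u)=d(v,v)=0$ and $d(u,v)=d(v,u)$, both trivial. Symmetry holds because the two equalities are symmetric under exchanging the roles of $(u,v)$ and $(x,y)$.

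For (iii), let $e=\{u,v\}$ and $f=\{u,w\}$ with $w\neq v$. We orient $f$ as $x=u$, $y=w$. The first equality would then require $0=d(u,u)=d(v,w)$, i.e.\ $v=w$, which is a contradiction. A pairwise-$\varphi$ set of edges therefore contains no two incident edges, so it is a matching.

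For (iv), suppose a geodesic traverses, in order, $u,v,\dots,x,y$, allowing $v=x$. We orient the edges along the path. Then $d(u,y)=d(v,x)+2\neq d(v,x)$, so the second equality fails. This also reproves (iii) for the case of consecutive path edges.

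For (ii), a single counterexample suffices. We take $K_{2,3}$ with parts $\{y,q\}$ and $\{x,p,p'\}$, and the edges $f=\{x,y\}$, $e=\{q,p\}$, $g=\{q,p'\}$. Here $f$ and $e$ are opposite sides of the $4$-cycle $x\,y\,p\,q$. Orienting $(x,y)$ and $(q,p)$ gives $d(x,q)=1=d(y,p)$ and $d(x,p)=2=d(y,q)$, so $f\,\varphi\,e$. By the same computation $f\,\varphi\,g$. However, $e$ and $g$ share the vertex $q$, so by (iii) they are not $\varphi$-related. Hence $\varphi$ is not transitive.

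For the cycle statements we label the vertices by $\Z_{2\nu}$ or $\Z_{2\nu+1}$ and use vertex-transitivity to fix one edge as $\{0,1\}$.

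In $C_{2\nu}$, take the edge $\{j,j+1\}$ with $1\le j\le \nu-1$. Both edges then lie on the path $0,1,\dots,j+1$, which has length $j+1\le\nu$ and is therefore a geodesic, so (iv) excludes them. The edges on the other side are excluded by the reflection $i\mapsto 1-i$. The only candidate left is the antipodal edge $\{\nu,\nu+1\}$. Orienting $u=0,v=1,x=\nu+1,y=\nu$ gives $d(0,\nu+1)=\nu-1=d(1,\nu)$ and $d(0,\nu)=\nu=d(1,\nu+1)$, so the two edges are related. Each edge thus has exactly one $\varphi$-partner. Any three pairwise-related edges would give some edge two partners, so the maximal pairwise-$\varphi$ sets are exactly the $\nu$ antipodal pairs, which proves (v).

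In $C_{2\nu+1}$, the same geodesic argument excludes $\{j,j+1\}$ whenever $j+1\le\nu$, and by reflection whenever $j\ge\nu+1$. This leaves only the two "near-antipodal" edges $\{\nu,\nu+1\}$ and, symmetrically, $\{\nu+1,\nu+2\}$. These are the only genuinely non-routine cases, because no geodesic contains both edges and (iv) does not apply. For $\{\nu,\nu+1\}$ we check directly with $x=\nu$, $y=\nu+1$. The first equality holds, since $d(0,\nu)=\nu=d(1,\nu+1)$, but the second fails, since $d(0,\nu+1)=\nu\neq\nu-1=d(1,\nu)$. By Lemma~\ref{lem:welldef} no orientation helps, and the other edge follows by reflection. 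Hence every $\varphi$-class is a singleton, which proves (vi).

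The only real obstacle is (ii), where an explicit counterexample must be found. The trick we use is to chain two $4$-cycles sharing an edge so that the outer edges become incident, at which point (iii) forbids the transitive closure. The remaining care is bookkeeping the near-antipodal pair in the odd cycle.
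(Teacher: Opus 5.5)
Your proof is correct. For (i)--(iv) and the $K_{2,3}$ witness in (ii), it is the paper's argument up to relabeling. You also compute the four distances to verify $f\,\varphi\,e$ and $f\,\varphi\,g$, which the paper only asserts.

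The genuine difference is in (v)--(vi). The paper does a single arithmetic computation. For non-incident $\{v_a,v_{a+1}\}$ and $\{v_b,v_{b+1}\}$ with $t=b-a$, the forward orientation makes the first equality automatic. The second becomes $\delta(t+1)=\delta(t-1)$, i.e.\ $t+1\equiv\pm(t-1)\pmod m$, which forces $2t\equiv 0$. This handles both parities at once and identifies the partner ($t\equiv\nu$) directly.

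You instead use (iv) to discard every edge lying on a common arc of length at most $\lfloor m/2\rfloor$ with $\{0,1\}$. You then use the reflection $i\mapsto 1-i$ for the other side, and check the one residual edge (even case) or two residual edges (odd case) by hand. Your route gives a geometric explanation: only edges on no common geodesic can be related, so (v)--(vi) become consequences of (iv) plus a local check. The paper's route buys brevity and avoids case bookkeeping.

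Two small points on that bookkeeping. In the odd case the reflection excludes $\{j,j+1\}$ only for $j\ge\nu+2$, not $j\ge\nu+1$: the path $1,0,2\nu,\dots,\nu+1$ has length $\nu+1>\nu$, so it is not a geodesic. Your next sentence correctly keeps $\{\nu+1,\nu+2\}$ as a residual case, so nothing breaks. Also, the symmetry that moves an arbitrary edge to $\{0,1\}$ is the rotation action, i.e.\ edge-transitivity of the cycle, rather than vertex-transitivity as such.
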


\begin{proof}
(i) $d(u,u)=d(v,v)=0$ and $d(u,v)=d(v,u)$; symmetry is immediate from the
symmetry of the conditions in $\{e,f\}$.

(iii) If $e=\{u,v\}$, $f=\{u,y\}$ with $v\ne y$, then $d(u,u)=0<d(v,y)$, so
$e\;\not\!\varphi\;f$; a pairwise-$\varphi$ set is therefore a matching.

(iv) Orient a common shortest path so the order along it is $u,v,\dots,x,y$.
Sub-paths of shortest paths are shortest, so $d(u,x)=1+d(v,x)$ and
$d(v,y)=d(v,x)+1$, giving the first equality of Definition~\ref{def:phi};
but $d(u,y)=d(v,x)+2\ne d(v,x)$, so the second fails, and by
Lemma~\ref{lem:welldef} no orientation repairs it.

(v)--(vi) In $C_m$ with cyclic distance
$\delta(t)=\min(t\bmod m,\,m-(t\bmod m))$, take non-incident
$e=\{v_a,v_{a+1}\}$, $f=\{v_b,v_{b+1}\}$ and $t=b-a\bmod m$. Forward,
$d(v_a,v_b)=\delta(t)=d(v_{a+1},v_{b+1})$, so the test reduces to
$\delta(t+1)=\delta(t-1)$, i.e.\ $t+1\equiv\pm(t-1)\pmod m$. The $+$ sign
gives $2\equiv 0$, impossible for $m\ge 3$; the $-$ sign gives
$2t\equiv 0$. For odd $m$ this forces $t\equiv 0$ (singletons); for $m=2\nu$
it forces $t\equiv 0$ or $t\equiv\nu$, the latter being the antipodal pair.

(ii) is witnessed by $K_{2,3}$ (Figure~\ref{fig:k23}): with parts
$\{u_1,u_2\}$, $\{v_1,v_2,v_3\}$, the edges $a=u_1v_1$ and $b=u_2v_2$ are
$\varphi$-related, as are $b$ and $c=u_1v_3$, but $a,c$ share $u_1$, so
$a\;\not\!\varphi\;c$ by~(iii).
\end{proof}

\begin{figure}[t]
\centering
\includegraphics[width=.5\linewidth]{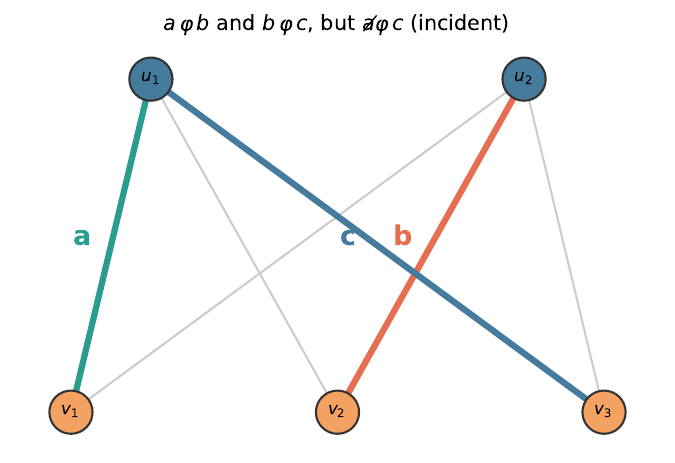}
\caption{Non-transitivity of $\varphi$ in $K_{2,3}$: $a\,\varphi\,b$ and
$b\,\varphi\,c$, yet $a\,\not\!\varphi\,c$ because $a,c$ are incident. The
relation captures local parallelisms that can conflict globally; the quotient
construction of Section~\ref{sec:quotient} resolves the conflicts exactly.}
\label{fig:k23}
\end{figure}

The two relations $\varphi$ and $\theta$ differ markedly away from partial
cubes: on odd cycles $\theta$ is non-transitive while $\varphi$ is
(vacuously) an equivalence relation; and on the Petersen graph the whole edge
set is a single $\theta$-class (it is not a partial cube), whereas $\varphi$
has five classes of size three, one per parallel matching, which drive the
compact embedding of Example~\ref{ex:petersen}. On partial cubes, however,
the two agree, and this is a theorem in both directions.

\begin{theorem}[$\varphi$ on partial cubes]\label{thm:phitheta}
If $G$ is a partial cube, then $\varphi$ coincides with $\theta$, is an
equivalence relation on $E(G)$, and its classes are exactly the Djokovi\'c
cuts. Conversely, if $G$ is bipartite, $\varphi$ is an equivalence relation,
and each $\varphi$-class induces an edge cut with convex sides, then $G$ is a
partial cube with $\theta$-classes equal to its $\varphi$-classes.
\end{theorem}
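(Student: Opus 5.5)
The plan is to work in both directions through the classical Djokovi\'c--Winkler theory. On the partial-cube side we use the Hamming labeling, which reduces $\varphi$ to bit arithmetic. On the converse side we use Djokovi\'c's characterization: a connected bipartite graph is a partial cube iff every set $W_{ab}=\{w: d(w,a)<d(w,b)\}$ is convex.

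\emph{Forward direction.} Let $L\colon V\to\Z_2^p$ be an isometric embedding into $Q_p$, so that $d(s,t)=|L(s)\oplus L(t)|$. Every edge flips exactly one coordinate. It is classical (Djokovi\'c, Winkler) that $e\;\theta\;f$ iff $e$ and $f$ flip the same coordinate, and that the $\theta$-classes are the cuts $F_{ab}$ between $W_{ab}$ and $W_{ba}$. So it suffices to show that $e\;\varphi\;f$ iff $e$ and $f$ flip the same coordinate.

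Let $e=\{u,v\}$ flip coordinate $i$ and $f=\{x,y\}$ flip coordinate $j$, and put $w=L(u)\oplus L(x)$. Then
\[
d(u,x)=|w|,\quad d(v,y)=|w\oplus e_i\oplus e_j|,\quad d(u,y)=|w\oplus e_j|,\quad d(v,x)=|w\oplus e_i|.
\]
\begin{itemize}
\item If $i=j$, we first reorient $f$ using Lemma~\ref{lem:welldef} so that $w_i=0$. Then $d(v,y)=|w|=d(u,x)$ and $d(u,y)=|w|+1=d(v,x)$, so $e\;\varphi\;f$.
\item If $i\ne j$, the equality $|w\oplus e_i\oplus e_j|=|w|$ forces $w_i\ne w_j$. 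The equality $|w\oplus e_j|=|w\oplus e_i|$ forces $w_i=w_j$. These are incompatible, so $e\;\not\!\varphi\;f$.
\end{itemize}
Hence $\varphi=\theta$. Because $\theta$ is an equivalence relation on partial cubes whose classes are the Djokovi\'c cuts, the same holds for $\varphi$.

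\emph{Converse.} Assume $G$ is bipartite, $\varphi$ is an equivalence relation, and each $\varphi$-class $F$ is exactly the set of edges between two convex sides $A$ and $B$ with $A\cup B=V$. Fix an edge $ab\in F$ with $a\in A$ and $b\in B$. We claim $W_{ab}=A$ and $W_{ba}=B$.

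Take $x\in A$ and suppose $x\notin W_{ab}$. Since $G$ is bipartite, $d(x,a)\ne d(x,b)$, so $d(x,a)=d(x,b)+1$. Then $b$ lies on a shortest $x$--$a$ path. Both endpoints $x,a$ lie in $A$, and $A$ is convex, so $b\in A$. This contradicts $b\in B$. Hence $A\subseteq W_{ab}$, and symmetrically $B\subseteq W_{ba}$. Since $W_{ab}$ and $W_{ba}$ are disjoint and $A\cup B=V$, both inclusions are equalities.

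Consequently every $W_{ab}$ is convex, and Djokovi\'c's theorem makes $G$ a partial cube. Its $\theta$-class of $ab$ is the cut between $W_{ab}$ and $W_{ba}$, which is $F$, so the $\theta$-classes equal the $\varphi$-classes. The forward direction then confirms that $\varphi=\theta$ edge by edge.

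\emph{Main obstacle.} The delicate points are bookkeeping rather than depth. In the forward direction, the orientation normalization $w_i=0$ must be justified by Lemma~\ref{lem:welldef}, and the argument relies on the classical fact that $\theta$-classes correspond to coordinates. In the converse, one must read ``induces an edge cut'' as meaning that $F$ is precisely the set of edges between $A$ and $B$; this is what identifies $F$ with the Djokovi\'c cut $F_{ab}$. I would state that reading explicitly at the start of the converse.
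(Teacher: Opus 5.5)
Your proposal is correct. The forward direction is essentially the paper's own bit computation. Your reorientation to $w_i=0$ is harmless but unnecessary: when $i=j$, both equalities hold for every $w$, as the paper notes.

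The converse follows the same global plan as the paper: show $W_{ab}=A$ and $W_{ba}=B$, then invoke Djokovi\'c's convexity criterion. Where you differ is in the proof of the key inclusion $A\subseteq W_{ab}$.
\begin{itemize}
\item The paper takes a shortest $w$--$v$ path and picks the edge $f=\{x,y\}\in F$ where it crosses from $A$ to $B$. It then combines the $\varphi$-equality $d(y,v)=d(x,u)$ with the triangle inequality to get $d(w,v)\ge d(w,u)+1$.
\item You observe instead that $d(x,a)=d(x,b)+1$ would put $b$ on an $x$--$a$ geodesic, so convexity of $A$ forces $b\in A$, a contradiction.
\end{itemize}

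Your argument is shorter and more informative about the hypotheses. It shows that the partial-cube conclusion needs only connectivity, bipartiteness, and the fact that every edge crosses a cut with convex sides. The $\varphi$-hypotheses (that $\varphi$ is an equivalence relation, and that its classes are exactly these cuts) are used only to identify each $\varphi$-class with the set of edges between $W_{ab}$ and $W_{ba}$. That identification is what yields $\theta$-classes $=$ $\varphi$-classes.

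The paper's route keeps $\varphi$ visibly in play by making its equalities carry the metric inequality. Its appeal to convexity inside that step is not actually needed there; convexity is genuinely used only when Djokovi\'c's criterion is applied.

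Your explicit reading of ``induces an edge cut'' as $F=E(A,B)$ is the right one. The paper relies on the same reading implicitly, both when it asserts that a shortest path from $A$ to $B$ crosses at an edge of $F$ and in its final identification of the $\theta$-classes with the $\varphi$-classes.
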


\begin{proof}
\emph{Forward direction.} Fix an isometric $\lambda\colon V(G)\to Q_p$, so
$d_G$ is Hamming distance and each edge flips one coordinate. Let $e$ flip
coordinate $i$, $f$ flip $j$, and $z=\lambda(u)\oplus\lambda(x)$. Then
$d(u,x)=\mathrm{wt}(z)$, $d(v,y)=\mathrm{wt}(z\oplus e_i\oplus e_j)$,
$d(u,y)=\mathrm{wt}(z\oplus e_j)$, $d(v,x)=\mathrm{wt}(z\oplus e_i)$. If
$i=j$, the first two and the last two agree for all $z$, and the
side-respecting orientation makes both $\varphi$ equalities hold. If
$i\ne j$, the first equality holds iff $z_i\ne z_j$ and the second iff
$z_i=z_j$, mutually exclusive; so edges of distinct cuts are never
$\varphi$-related. Hence the $\varphi$-classes are the coordinate cuts,
which are the $\theta$-classes of a partial cube \cite{djokovic1973},
$\varphi=\theta$, and transitivity follows.

\emph{Converse direction.} Assume $G$ bipartite, $\varphi$ an equivalence
relation, and every $\varphi$-class $F$ an edge cut $\{A_F,B_F\}$ with both
sides convex. We verify Djokovi\'c's criterion \cite{djokovic1973}: a
bipartite graph is a partial cube if and only if, for every edge
$\{u,v\}$, the sets
$W_{uv}=\{w:d(w,u)<d(w,v)\}$ and $W_{vu}=\{w:d(w,v)<d(w,u)\}$ are convex.
Fix an edge $e=\{u,v\}$, let $F$ be its $\varphi$-class and $\{A_F,B_F\}$ the
associated cut with $u\in A_F$, $v\in B_F$. We claim $W_{uv}=A_F$ and
$W_{vu}=B_F$, which yields convexity of $W_{uv}$ and $W_{vu}$ from the
hypothesis and completes the proof.

First, $A_F\subseteq W_{uv}$. Let $w\in A_F$ and let $P$ be a shortest
$w$--$v$ path. Since $w\in A_F$ and $v\in B_F$, $P$ crosses the cut $F$ at
some edge $f=\{x,y\}$ with $x\in A_F$, $y\in B_F$. Because $f\in F$ and
$\varphi$-classes consist of $\varphi$-related edges, $e\;\varphi\;f$, so
$d(u,x)=d(v,y)$ and $d(u,y)=d(v,x)$. Both sides of the cut are convex, so the
sub-path of $P$ from $w$ to $x$ stays in $A_F$ and the sub-path from $y$ to
$v$ stays in $B_F$; in particular $P$ crosses $F$ exactly once. Now
\[
d(w,v)=d(w,x)+1+d(y,v)=d(w,x)+1+d(x,u)\ge d(w,u)+1>d(w,u),
\]
using $d(y,v)=d(x,u)$ from the $\varphi$ equalities and the triangle
inequality. Hence $w\in W_{uv}$. Symmetrically $B_F\subseteq W_{vu}$. Since
$G$ is bipartite, $V=W_{uv}\,\dot\cup\,W_{vu}$ (no vertex is equidistant from
the two ends of an edge), and since $V=A_F\,\dot\cup\,B_F$ as well, the two
inclusions force $W_{uv}=A_F$ and $W_{vu}=B_F$, as claimed. Djokovi\'c's
criterion now applies, $G$ is a partial cube, and its $\theta$-classes are
the cuts $\{A_F,B_F\}$, i.e.\ the $\varphi$-classes.
\end{proof}

\begin{remark}\label{rem:notmin}
Theorem~\ref{thm:phitheta} does not say the isometric dimension is the
minimum over the larger family $\Cay(\Z_2^k,S)$: composite generators can do
strictly better even on partial cubes. The companion paper \cite{fokam-p2}
exhibits stars---the simplest partial cubes---whose minimum binary dimension
is $\lceil\log_2 q\rceil+1$, exponentially below the isometric dimension;
the underlying extremal question is that of maximum sum-free sets in
$\Z_2^k$ \cite{rhemtulla-street1970,green-ruzsa2005}.
\end{remark}

\subsection{Oriented partitions and the relation $\Phi$}

Over $\Z_2^k$ every generator is an involution ($g=-g$) and edges need no
orientation. Over a general abelian group, traversing an edge forward adds
$g$ and backward adds $-g\ne g$, so generator assignments must carry
orientations.

\begin{definition}[Oriented partition]\label{def:oriented}
An \emph{oriented partition} of $G$ is a partition
$\mathcal{P}=\{F_1,\dots,F_t\}$ of $E(G)$ together with a chosen direction
$u\to v$ for each edge, with the intended semantics that all edges of $F_j$
carry the same generator $g_j$ added in the forward direction.
\end{definition}

The following observation is elementary---it is an immediate consequence of
injectivity, and we state it as a proposition rather than a theorem for that
reason---but it corrects a natural error: the matching constraint of the
hypercube setting does \emph{not} survive the generalization.

\begin{proposition}[Partial-permutation constraint]\label{prop:partialperm}
In any isometric embedding $\phi\colon V(G)\to\Cay(\Gamma,S)$, the set of
edges carrying a fixed generator $g$, with forward orientations, has
in-degree at most $1$ and out-degree at most $1$ at every vertex.
Equivalently, every generator class is a partial permutation: a disjoint
union of directed paths and directed cycles.
\end{proposition}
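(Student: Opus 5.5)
The plan is to derive both degree bounds directly from the injectivity of $\phi$, and then read off the path/cycle structure from the resulting degree condition. Fix a generator $g\in S$ and let $F_g$ be the set of edges carrying $g$. Orient each edge $\{u,v\}\in F_g$ forward as $u\to v$ exactly when $\phi(v)-\phi(u)=g$. If $g$ is an involution then $g=-g$ and either orientation qualifies; in that case I will fix one orientation per edge arbitrarily, as the oriented-partition semantics of Definition~\ref{def:oriented} permits. The argument below does not depend on that choice.

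\emph{Out-degree.} Suppose $u\to v$ and $u\to v'$ are both forward edges of $F_g$. Then $\phi(v)=\phi(u)+g=\phi(v')$, and injectivity (Definition~\ref{def:isometric}) forces $v=v'$. So the two edges coincide; since $G$ is a simple graph there are no parallel edges.

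\emph{In-degree.} Suppose $u\to v$ and $u'\to v$. Then $\phi(u)=\phi(v)-g=\phi(u')$, so $u=u'$ by injectivity. Only injectivity is used here, not isometry, and this is worth remarking.

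For the equivalence, I will use the standard fact that a directed graph with all in- and out-degrees at most $1$ decomposes into its weak components. Each such component is either a directed path or a directed cycle: follow successors forward from any vertex until the walk either stops or returns to its start. No other return is possible, because in-degree is at most $1$. Conversely, a disjoint union of directed paths and cycles plainly satisfies both degree bounds.

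There is one subtle point, and I expect it to be the only real obstacle. When $g$ is an involution, a directed $2$-cycle $u\to v\to u$ would correspond to two distinct edges between $u$ and $v$, which cannot occur in a simple graph. So in the involutive case every component is a single edge, and $F_g$ is a matching, recovering the hypercube situation of Theorem~\ref{thm:phiprops}(iii).

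For $g$ of order $\ge 3$, a cycle can arise: for example $C_3=\Cay(\Z_3,\{1,2\})$ with all three edges carrying $g=1$. This is exactly the sense in which the matching constraint fails to generalize.
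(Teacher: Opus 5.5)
Your proof is correct and follows the paper's argument: two forward $g$-edges out of (or into) a common vertex would give two distinct vertices the same label, contradicting injectivity, and the path/cycle decomposition is the standard fact about digraphs with all in- and out-degrees at most $1$. One small gap in your involutive aside, which goes beyond the proposition: ruling out directed $2$-cycles does not by itself make every component a single edge. You also need that $u\to v\to w$ with $2g=0$ gives $\phi(w)=\phi(u)+2g=\phi(u)$, hence $w=u$. More directly, two $g$-edges at $v$ going to $w_1,w_2$ give $\phi(w_1)=\phi(v)+g=\phi(w_2)$ whatever their orientation.
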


\begin{proof}
Two forward edges out of $v$ would give $\phi(w_1)=\phi(v)+g=\phi(w_2)$ with
$w_1\ne w_2$, contradicting injectivity; dually for two forward edges into
$v$. A digraph with all in- and out-degrees at most $1$ is a disjoint union
of directed paths and cycles.
\end{proof}

\begin{remark}[Matching is the involutive special case]\label{rem:matching}
It is tempting to require every class to be a matching, as the binary case
suggests. That is false in general: in $K_3=\Cay(\Z_3,\{1,2\})$ all three
edges carry the same generator $g=1$, oriented around the cycle
(Figure~\ref{fig:k3})---pairwise incident, not a matching, yet a valid
directed $3$-cycle class. The matching constraint is exactly the involutive
case $2g=0$ of Proposition~\ref{prop:partialperm}; imposing it would forbid
the directed-cycle classes that produce the odd cyclic factors
$\Z_{2k+1}$, e.g.\ forcing $K_3\hookrightarrow\Z_4$ where $\Z_3$ is optimal.
\end{remark}

\begin{figure}[t]
\centering
\includegraphics[width=.36\linewidth]{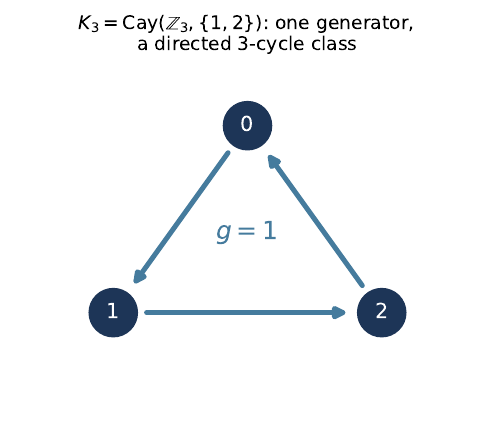}
\caption{The corrected class constraint. In $K_3=\Cay(\Z_3,\{1,2\})$ one
generator carries all three edges as a directed cycle. Classes are partial
permutations, not matchings; matchings are the involutive case $2g=0$.}
\label{fig:k3}
\end{figure}

\begin{definition}[Oriented $\Phi$-test]\label{def:Phi}
Ordered edges $(u\to v)$ and $(x\to y)$ pass the oriented $\Phi$-test if
$d(u,x)=d(v,y)$. Two undirected edges are $\Phi$-related if some choice of
orientations passes the test.
\end{definition}

\begin{proposition}[Necessity]\label{prop:necessity}
If $(u\to v)$ and $(x\to y)$ carry the same generator $g$ in an isometric
embedding into $\Cay(\Gamma,S)$, then $d(u,x)=d(v,y)$, and moreover
$|d(u,y)-d(u,x)|\le 1$ and $|d(v,x)-d(u,x)|\le 1$.
\end{proposition}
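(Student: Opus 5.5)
The plan is to work entirely inside the host, using translation invariance of the word metric (Definition~\ref{def:cayley}) together with isometry (Definition~\ref{def:isometric}). The hypothesis says $\phi(v)=\phi(u)+g$ and $\phi(y)=\phi(x)+g$. For the equality, I compute
\[
d(v,y)=|\phi(y)-\phi(v)|_S=|(\phi(x)+g)-(\phi(u)+g)|_S=|\phi(x)-\phi(u)|_S=d(u,x),
\]
so the $g$'s cancel. This is the abelian analogue of a parallelogram, and it needs no assumption on the order of $g$.

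For the two inequalities I will not use the shared generator at all; only adjacency is needed. Since $x$ and $y$ are adjacent in $G$, the triangle inequality in the graph metric $d$ gives $|d(u,y)-d(u,x)|\le d(x,y)=1$. Likewise, $u$ and $v$ are adjacent, so $|d(v,x)-d(u,x)|\le d(u,v)=1$. The same bounds can also be read in the host: $\phi(y)-\phi(u)$ and $\phi(x)-\phi(u)$ differ by $g\in S$, so their word lengths differ by at most $1$. Either route works, and I will present the graph-metric one as the simpler.

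I expect no real obstacle here. The one point that needs care is keeping the orientation consistent: the cancellation requires both edges to be traversed forward, adding $+g$. If one edge were reversed, we would instead get $d(u,y)=d(v,x)$ only in the involutive case $2g=0$, which recovers the second equality of Definition~\ref{def:phi}. I will note this in a remark, since it explains why $\Phi$ asks for only one equality while $\varphi$ asks for two.
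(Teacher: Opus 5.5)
Your proof is correct and is essentially the paper's. The equality comes from the same cancellation $\phi(y)-\phi(v)=\phi(x)-\phi(u)$. For the inequalities, the paper compares the word norms of $z+g$ and $z-g$ in the host with that of $z=\phi(x)-\phi(u)$, while you use the triangle inequality in $G$ across the edges $xy$ and $uv$; this is equally valid. Your version also makes explicit that these two bounds hold for every pair of edges in any graph, so only the equality $d(u,x)=d(v,y)$ actually filters anything in the $\Phi$-test.
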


\begin{proof}
Write $z=\phi(x)-\phi(u)$ and $|\cdot|$ for the word norm. Then
$\phi(y)-\phi(v)=(\phi(x)+g)-(\phi(u)+g)=z$, so $d(v,y)=|z|=d(u,x)$. Also
$\phi(y)-\phi(u)=z+g$ and $\phi(x)-\phi(v)=z-g$, and the word norm changes by
at most $1$ under addition of a single generator.
\end{proof}

For an involution $g=-g$ both equalities $d(u,x)=d(v,y)$ and $d(u,y)=d(v,x)$
hold, recovering $\varphi$; for $g\ne-g$ only the first survives, as an OR
over the two orientations. Thus $\Phi$ is a necessary filter, deliberately
weaker than $\varphi$: its role in this paper is to prune the search space of
oriented partitions (Section~\ref{sec:algorithm}), and the full consistency
burden is carried by the exact core of the next section.

\section{The quotient labeling theorem}
\label{sec:quotient}

This section is the theoretical core. Given a partition of $E(G)$ into
candidate generator classes, we determine exactly when the induced labeling
is consistent, and show that the most generic consistent labeling is computed
by linear algebra: a quotient of the free module on the classes by the
lattice of signed cycle--class incidences. We treat the binary case first for
transparency, then the general case, of which the binary case is the
reduction modulo~$2$.

\begin{remark}[The engine is homological, and we say so]\label{rem:homology}
Readers from algebraic combinatorics will recognize everything in this
section as the computation of a cokernel of a boundary-type map: the
consistency condition is that the class assignment vanish on the cycle space,
and the universal solution is the quotient of the class module by the image
of the cycle lattice---over $\F_2$ this is standard cycle-space linear
algebra \cite[Ch.~14]{godsil-royle2001}, and over $\Z$ the Smith normal form
is the standard constructive tool \cite{stanley2016}. We make no originality
claim for this machinery. The content of the section lies in the
\emph{metric} statements attached to it: part~(iii) of
Theorems~\ref{thm:quotient2} and~\ref{thm:quotientZ} (quotient labelings
never stretch), and above all the Join Lemmas (Lemmas~\ref{lem:join2}
and~\ref{lem:joinZ}), which show that the finest partition's quotient is
isometric---the statement that turns a homology computation into an
embedding theorem.
\end{remark}

\subsection{The binary ($\F_2$) case}

Given a partition $\mathcal{P}=\{F_1,\dots,F_t\}$ of $E(G)$, assign
generators $g_j\in\Z_2^k$ to the classes and label
\begin{equation}\label{eq:label}
\lambda(v)=\sum_{e\in P(r,v)} g_{\mathrm{class}(e)}\quad(\text{sum in }\Z_2^k),
\end{equation}
along a path $P(r,v)$ from a fixed root $r$. We ask when $\lambda$ is well
defined, i.e.\ independent of the chosen path.

\begin{theorem}[Cocycle condition]\label{thm:cocycle2}
The labeling~\eqref{eq:label} is well defined iff for every cycle
$C=(e_1,\dots,e_\ell)$ of $G$,
\begin{equation}\label{eq:star}
\sum_{i=1}^{\ell} g_{\mathrm{class}(e_i)}=0\in\Z_2^k. \tag{$\star$}
\end{equation}
It suffices to verify~\eqref{eq:star} on any cycle basis of $G$.
\end{theorem}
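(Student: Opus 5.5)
We prove the equivalence by comparing two walks from $r$ to the same vertex, and then reduce the verification to a cycle basis by linearity over $\F_2$. Throughout we read \eqref{eq:label} for arbitrary walks, not only simple paths, so that the definition does not depend on which paths exist. For a walk $W=(e_1,\dots,e_\ell)$ write $\sigma(W)=\sum_i g_{\mathrm{class}(e_i)}\in\Z_2^k$. Since every $g_j$ is an involution, $\sigma$ is additive under concatenation, $\sigma(W^{-1})=\sigma(W)$, and an edge traversed twice contributes $0$.

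\emph{Necessity.} Suppose $\lambda$ is well defined, and let $C$ be a cycle through a vertex $w$. Fix a path $Q$ from $r$ to $w$. Then $Q$ and $Q$ followed by $C$ are two walks from $r$ to $w$. Well-definedness gives $\sigma(Q)=\sigma(Q)+\sigma(C)$, hence $\sigma(C)=0$, which is \eqref{eq:star}.

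\emph{Sufficiency.} Suppose \eqref{eq:star} holds for every cycle, and let $P,P'$ be two walks from $r$ to $v$. The closed walk $P\cdot P'^{-1}$ has value $\sigma(P)+\sigma(P')$. It remains to see that every closed walk has $\sigma=0$. By additivity, $\sigma$ of a closed walk depends only on the edge multiset reduced mod $2$, i.e.\ on the set of edges used an odd number of times. At every vertex, a closed walk uses an even number of edge-incidences, so this odd-edge set is an even subgraph, i.e.\ an element of the cycle space. Every even subgraph decomposes as an edge-disjoint union of cycles. Summing \eqref{eq:star} over these cycles therefore gives $\sigma=0$ for the closed walk.

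\emph{Cycle basis.} The map $E\to\Z_2^k$, $e\mapsto g_{\mathrm{class}(e)}$, extends linearly to a homomorphism $\F_2^E\to\Z_2^k$. This homomorphism agrees with $\sigma$ on edge sets. If it vanishes on a cycle basis, then it vanishes on their $\F_2$-span, which is the whole cycle space and in particular contains every cycle. Hence \eqref{eq:star} holds on all cycles.

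The main obstacle is the mod-$2$ reduction of closed walks in the sufficiency step: repeated edges have to be handled correctly. We handle them by the parity argument (each vertex has even degree in the odd-edge set), together with the standard fact that even subgraphs are edge-disjoint unions of cycles \cite[Ch.~14]{godsil-royle2001}. This step relies on $g=-g$; in the general case of Theorem~\ref{thm:cocycleZ}, signed incidences would be needed instead.
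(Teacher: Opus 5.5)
Your argument is correct and is essentially the paper's proof: the mod-$2$ reduction of two routes from $r$ to $v$ (your odd-edge set, the paper's symmetric difference) is an even subgraph and hence an edge-disjoint union of cycles; a violating cycle produces two routes with different sums; and linearity over $\F_2$ reduces everything to a cycle basis. The one step that relies on your reading of \eqref{eq:label} over walks is necessity, because $Q$ followed by $C$ is not a path; if ``path'' is meant literally, take $Q$ to be a shortest path from $r$ to $V(C)$, so that it meets $C$ only at its endpoint $w$, and then the two routes from $r$ to a neighbour $w'$ of $w$ on $C$ (through the edge $ww'$, or around the rest of $C$) are simple paths whose sums differ by exactly the sum over $C$.
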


\begin{proof}
Two $r$--$v$ paths give equal sums iff the sum over their symmetric
difference vanishes; that symmetric difference is an edge-disjoint union of
cycles, so~\eqref{eq:star} for all cycles implies path-independence, and a
violating cycle yields two $r$--$v$ paths with different sums. The cycle
space over $\F_2$ is spanned by any cycle basis
\cite[Ch.~14]{godsil-royle2001} and~\eqref{eq:star} is linear in the cycle,
so a basis suffices.
\end{proof}

\begin{remark}[Why the cut paradigm works on partial cubes]\label{rem:cuts}
If a class $F_i$ is an edge cut, every cycle crosses it an even number of
times, so $F_i$ contributes $0$ to every instance of~\eqref{eq:star}
regardless of its generator. When classes are not cuts, some cycle crosses a
class an odd number of times, and naive breadth-first XOR propagation becomes
order-dependent---the root cause of failure of cut-based heuristics on
non-partial-cubes.
\end{remark}

Fix a cycle basis $B_1,\dots,B_c$ and define the cycle--class parity matrix
$A\in\F_2^{c\times t}$ by $A[i,j]=|F_j\cap E(B_i)|\bmod 2$.
Condition~\eqref{eq:star} on the basis reads
$A\,(g_1\mid\cdots\mid g_t)^{\!\top}=0$ over $\F_2$.

\begin{theorem}[Quotient Labeling Theorem, binary case]\label{thm:quotient2}
Let $\mathcal{P}$ have $t$ classes and $\rho=\rk_{\F_2}(A)$. Then:
\begin{itemize}
\item[(i)] the most generic consistent assignment has dimension $k=t-\rho$,
given by $g_j=\pi(e_j)$, the images of the standard basis of $\Z_2^t$ under
the quotient map $\pi\colon\Z_2^t\to\Z_2^t/\mathrm{rowspan}(A)\cong\Z_2^k$;
every other consistent assignment is a linear image of it;
\item[(ii)] the labeling~\eqref{eq:label} under this assignment is
conflict-free;
\item[(iii)] with $S=\{g_j:g_j\ne 0\}$, every $G$-path of length $\ell$ maps
to a Cayley walk of length $\ell$, so
$d_{\Cay}(\lambda(u),\lambda(v))\le d_G(u,v)$ for all $u,v$: the only
possible failure of isometry is a shortcut, never a stretch.
\end{itemize}
\end{theorem}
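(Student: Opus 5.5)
The plan is to treat an assignment of generators as a linear map and reduce all three parts to Theorem~\ref{thm:cocycle2}. Identify an assignment $(g_1,\dots,g_t)$ with values in $\Z_2^k$ with the $\F_2$-linear map $\gamma\colon\Z_2^t\to\Z_2^k$, $e_j\mapsto g_j$. For each basis cycle $B_i$, the left side of~\eqref{eq:star} is $\gamma$ applied to the $i$-th row $a_i=\sum_j A[i,j]e_j$ of $A$, because an edge class repeated an even number of times cancels in characteristic two. Theorem~\ref{thm:cocycle2} therefore says: the assignment is consistent iff $\gamma(a_i)=0$ for all $i$, i.e.\ iff $\mathrm{rowspan}(A)\subseteq\ker\gamma$. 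This reformulation is the one step that needs care. The thing to check is that reducing cycle multiplicities mod $2$ per class is legitimate: the sum in~\eqref{eq:star} depends only on the counts $|F_j\cap E(B_i)|\bmod 2$.

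For (i), apply the universal property of the quotient. A linear map $\gamma$ kills $\mathrm{rowspan}(A)$ iff it factors uniquely as $\gamma=\bar\gamma\circ\pi$ with $\pi\colon\Z_2^t\to\Z_2^t/\mathrm{rowspan}(A)$. Hence $g_j=\pi(e_j)$ is consistent, since $\pi$ itself kills the row space. Any other consistent assignment is $\bar\gamma(\pi(e_j))$, a linear image of it. The quotient has dimension $t-\rho$ because $\dim\mathrm{rowspan}(A)=\rk_{\F_2}(A)=\rho$. For an explicit isomorphism with $\Z_2^k$, complete a basis of the row space to a basis of $\Z_2^t$. "Most generic" is read as exactly this initial-object property.

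Part (ii) is the case $\gamma=\pi$ of the reformulation. By Theorem~\ref{thm:cocycle2}, the labeling~\eqref{eq:label} is path-independent, hence well defined, i.e.\ conflict-free.

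For (iii), let $u=w_0,w_1,\dots,w_\ell=v$ be a $G$-path. Well-definedness gives $\lambda(w_{s})-\lambda(w_{s-1})=g_{\mathrm{class}(w_{s-1}w_s)}$: extend a root path to $w_{s-1}$ by the edge $w_{s-1}w_s$. There are two cases for each step. If this generator is nonzero, it lies in $S$ and the step is a Cayley edge. If it is zero, the step is stationary, which only shortens the walk. Either way, $\lambda(u)$ and $\lambda(v)$ are joined by a Cayley walk with at most $\ell$ genuine steps. Taking $\ell=d_G(u,v)$ then gives $d_{\Cay}(\lambda(u),\lambda(v))\le d_G(u,v)$.

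One caveat is that $S$ must generate the group for $\Cay$ to be the connected Cayley graph of Definition~\ref{def:cayley}. This holds because the $\pi(e_j)$ span the quotient. Since every element is an involution, we also have $S=-S$ automatically.
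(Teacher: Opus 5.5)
Your proposal is correct and follows essentially the same route as the paper. You use Theorem~\ref{thm:cocycle2} to rewrite consistency as the assignment vanishing on $\mathrm{rowspan}(A)$, then get (i) and (ii) from the universal property of the quotient $\pi$, and get (iii) by stepping along a shortest $G$-path with zero generators as stationary steps, exactly as the paper does; your extra checks (the mod-$2$ reduction of class multiplicities, and that $S$ generates the quotient) just make explicit details the paper leaves implicit.
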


\begin{proof}
(i) The solution set of $Ax=0$ is a subspace of dimension $t-\rho$; any
consistent assignment in any $\Z_2^{k'}$ defines a linear map
$\Z_2^t\to\Z_2^{k'}$ vanishing on $\mathrm{rowspan}(A)$, which factors
through $\pi$, so $\pi$ is universal. (ii) By Theorem~\ref{thm:cocycle2},
the images satisfy~\eqref{eq:star} on a basis, hence on every cycle.
(iii) Consecutive labels along a path differ by the class generator of the
traversed edge, an element of $S\cup\{0\}$; zero generators only shorten the
walk, and minimizing over paths gives the inequality.
\end{proof}

By Theorem~\ref{thm:quotient2}(iii) only $d_{\Cay}\ge d_G$ can fail. The next
lemma shows it never fails for the finest partition. Its proof rests on a
geodesic-independence property of binary Cayley words which is useful in its
own right (it also underlies the dimension lower bounds of the companion
paper \cite{fokam-p2}).

\begin{lemma}[Geodesic independence]\label{lem:geodindep}
If $s_1,\dots,s_\ell\in S$ are the letters of a geodesic word in
$\Cay(\Z_2^k,S)$, i.e.\ $|s_1+\dots+s_\ell|_S=\ell$, then the $s_i$ are
pairwise distinct, and indeed linearly independent over $\F_2$.
\end{lemma}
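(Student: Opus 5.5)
The plan is a direct minimality argument: a linear dependence among the letters of a word lets us delete letters without changing the sum, and that contradicts the assumption that the word is geodesic. I prove linear independence over $\F_2$ directly; pairwise distinctness then follows as a special case.

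Suppose, for contradiction, that the $s_i$ are linearly dependent over $\F_2$. Since scalars in $\F_2$ are only $0$ and $1$, a nontrivial dependence means there is a nonempty index set $I\subseteq\{1,\dots,\ell\}$ with $\sum_{i\in I}s_i=0$ in $\Z_2^k$. The group is abelian and every element is its own inverse, so
\[
s_1+\dots+s_\ell=\sum_{i\in I}s_i+\sum_{i\notin I}s_i=\sum_{i\notin I}s_i .
\]
The right-hand side is a word in the letters of $S$ of length $\ell-|I|<\ell$. Every letter lies in $S$, so by Definition~\ref{def:cayley} it represents the same group element by a walk of length $\ell-|I|$ from $0$. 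Hence $|s_1+\dots+s_\ell|_S\le \ell-|I|<\ell$, which contradicts geodesicity.

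The case $|I|=1$ cannot occur in any event, because $0\notin S$ by Definition~\ref{def:cayley}. The case $|I|=2$ is exactly the statement that no letter repeats: $s_i=s_j$ with $i\ne j$ gives $s_i+s_j=0$. So pairwise distinctness is the special case $I=\{i,j\}$ of the same contradiction, and linear independence is the full statement.

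I expect no real obstacle here. The only point requiring care is to use both $2g=0$ and commutativity in the rearrangement step. These two properties make deleting a zero-sum subword legitimate regardless of where its letters sit in the word, and they are precisely what fails for general abelian hosts. That failure is why the analogue for $\Z_m$ factors must be handled differently, in Lemma~\ref{lem:joinZ}.
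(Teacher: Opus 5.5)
Your argument is correct and is essentially the paper's: the paper first shows by splicing that every sub-multiset of a geodesic word is itself geodesic, and then notes that a zero-sum subset would have word norm $0<|T|$. You delete the zero-sum subset directly, which is the same minimality argument with the intermediate step collapsed. One correction to your closing commentary: the deletion step uses only commutativity, which every abelian host has, whereas $2g=0$ enters only in turning a repeated letter or an $\F_2$-dependence into a zero-sum subset, and it is that translation which fails in general (e.g.\ the word $1,1$ is geodesic in $\Cay(\Z_5,\{\pm1\})$).
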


\begin{proof}
First, every sub-multiset of a geodesic word is geodesic: if
$T\subseteq\{1,\dots,\ell\}$ and $\bigl|\sum_{i\in T}s_i\bigr|_S=\ell'<|T|$,
witnessed by letters $w_1,\dots,w_{\ell'}$, then---the group being abelian,
letters may be freely reordered---replacing the letters indexed by $T$ with
$w_1,\dots,w_{\ell'}$ produces a word of length $\ell-|T|+\ell'<\ell$ for the
same sum, contradicting geodesicity. Hence
$\bigl|\sum_{i\in T}s_i\bigr|_S=|T|$ for all $T$. If some nonempty subset
summed to zero (in particular, if two letters coincided, since $2s=0$), its
word norm would be $0<|T|$, a contradiction; and distinct subset sums for all
subsets is linear independence over $\F_2$.
\end{proof}

\begin{lemma}[Join Lemma]\label{lem:join2}
Let $\mathcal{P}_0$ be the all-singleton partition. Then its quotient
labeling is isometric: $d_{\Cay}(\lambda(u),\lambda(v))=d_G(u,v)$ for all
$u,v$.
\end{lemma}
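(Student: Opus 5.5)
The plan is to identify the finest quotient concretely as a boundary map. Then a short word in the Cayley graph becomes a small edge set whose mod-$2$ boundary is $\{u,v\}$, and such an edge set must contain a $u$--$v$ path. By Theorem~\ref{thm:quotient2}(iii) only the inequality $d_{\Cay}(\lambda(u),\lambda(v))\ge d_G(u,v)$ needs proof.

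\emph{Step 1 (describe the quotient).} For $\mathcal{P}_0$ we have $t=m$, and the classes are the edges themselves. The rows of $A$ are the indicator vectors of a cycle basis. So $\mathrm{rowspan}(A)$ is exactly the cycle space $\mathcal{C}\subseteq\F_2^{E}$, and $\pi\colon\F_2^E\to\F_2^E/\mathcal{C}$. Let $\partial\colon\F_2^E\to\F_2^V$ be the mod-$2$ boundary map, $\partial e=u+v$ for $e=\{u,v\}$. Then $\ker\partial=\mathcal{C}$ by the standard cycle-space description \cite[Ch.~14]{godsil-royle2001}. Hence $\pi(x)=\pi(y)$ if and only if $\partial x=\partial y$. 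By \eqref{eq:label}, $\lambda(v)=\pi(\mathbf 1_{P(r,v)})$, so $\lambda(v)-\lambda(u)=\pi(x)$ for any $x$ with $\partial x=u+v$, for instance the indicator of any $u$--$v$ path. The generator set is $S=\{\pi(e):e\in E\}\setminus\{0\}$. Since $G$ has no loops, $\partial e\neq0$, so each $\pi(e)\ne 0$.

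\emph{Step 2 (pull a geodesic word back to edges).} Let $\ell=d_{\Cay}(\lambda(u),\lambda(v))$, and write $\lambda(v)-\lambda(u)=s_1+\dots+s_\ell$ with $s_i=\pi(e_i)$ for edges $e_i\in E$. By Lemma~\ref{lem:geodindep} the $s_i$ are distinct, so the $e_i$ are distinct; in fact this is not essential, since repeated edges cancel mod $2$ and only shrink the set. Put $T=\{e_1,\dots,e_\ell\}$ and $x=\mathbf 1_T$. Then $\pi(x)=\lambda(v)-\lambda(u)$, so Step 1 gives $\partial x=u+v$. If $u=v$ there is nothing to prove, so assume $u\ne v$. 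Then the subgraph $(V,T)$ has odd degree exactly at $u$ and $v$.

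\emph{Step 3 (handshake argument).} Take the connected component of $(V,T)$ containing $u$. The sum of its degrees is even, so it must contain a second odd-degree vertex, and the only candidate is $v$. Hence $T$ contains a $u$--$v$ path, so $\ell=|T|\ge d_G(u,v)$. Combined with Theorem~\ref{thm:quotient2}(iii), this gives equality.

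The main obstacle is Step 1: justifying that the kernel of $\pi$ is \emph{exactly} $\ker\partial$, which requires the cycle basis to span all of $\ker\partial$ and not merely the cycles in the basis. I would argue it by a dimension count. Both spaces have dimension $m-n+1=c$, because $G$ is connected and so $\rk\partial=n-1$. Moreover $\mathcal{C}\subseteq\ker\partial$ trivially. Once Step 1 is in place, Steps 2 and 3 are routine parity arguments.
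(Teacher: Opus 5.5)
Your proposal is correct and follows the paper's proof essentially step for step: you identify the finest quotient with $\F_2^{E}$ modulo the cycle space, lift a geodesic word to an edge set whose mod-$2$ boundary is $\{u,v\}$, and finish with the handshake argument on the component containing $u$. Two small differences favour you: your dimension count actually justifies $\mathrm{rowspan}(A)=\ker\partial$, which the paper takes for granted, and your side remark is right that Lemma~\ref{lem:geodindep} is dispensable here, since replacing $\{e_1,\dots,e_\ell\}$ by the set of edges of odd multiplicity gives an edge set of size at most $\ell$ with the same image under $\pi$.
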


\begin{proof}
Identify $\Z_2^m$ with the edge space $\F_2^{E}$ and
$\mathrm{rowspan}(A)$ with the cycle space $\mathcal{Z}(G)$; the quotient map
is $\pi\colon\F_2^{E}\to\F_2^{E}/\mathcal{Z}(G)$, and
$\lambda(u)\oplus\lambda(v)=\pi(\chi_P)$ for the characteristic vector
$\chi_P$ of any $u$--$v$ path $P$.

Let $\ell=d_{\Cay}(\lambda(u),\lambda(v))$ and let $s_1,\dots,s_\ell$ be a
geodesic word from $\lambda(u)$ to $\lambda(v)$. By
Lemma~\ref{lem:geodindep} the letters $s_1,\dots,s_\ell$ are pairwise
distinct elements of $S$. Each letter is the image $\pi(\chi_e)$ of a
distinct edge $e$ (for the all-singleton partition, $S$ consists of the
images of the single edges, and distinct letters lift to distinct edges after
fixing, for each letter, one edge representing it); let
$F\subseteq E$ be a set of $\ell$ such representative edges, so that
$\pi(\chi_F)=\sum_i s_i=\lambda(u)\oplus\lambda(v)=\pi(\chi_P)$.

Then $\chi_F\oplus\chi_P\in\mathcal{Z}(G)$, so over $\F_2$ the boundaries
agree: $\partial F=\partial P=\{u,v\}$. In the spanning subgraph $(V,F)$,
every connected component contains an even number of odd-degree vertices;
the component containing $u$ therefore also contains $v$, hence contains a
$u$--$v$ path, and so $\ell=|F|\ge d_G(u,v)$. Combined with
Theorem~\ref{thm:quotient2}(iii), equality holds.
\end{proof}

\begin{remark}\label{rem:gapfix}
An earlier draft of this proof asserted directly that ``a geodesic word uses
distinct generators, hence corresponds to an edge set of size $\ell$''
without justifying distinctness; a referee rightly observed that the
assertion needs an argument when composite generators are present. The
splicing argument of Lemma~\ref{lem:geodindep} supplies exactly that
justification, and we thank the referee for the observation.
\end{remark}

\begin{corollary}[The naive embedding is the finest quotient]\label{cor:naive}
For $\mathcal{P}_0$, $A$ is the cycle-basis~$\times$~edge incidence matrix,
of rank $c=m-n+1$, so $k=m-(m-n+1)=n-1$; the free coordinates may be taken on
the edges of any spanning tree, recovering the spanning-tree labeling, which
is isometric by Lemma~\ref{lem:join2}. In particular every connected graph
embeds isometrically into $\Z_2^{\,n-1}$, and the bound $k\le n-1$ is
universal.
\end{corollary}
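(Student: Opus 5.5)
The plan is to compute the matrix $A$ for the all-singleton partition $\mathcal{P}_0$ directly, then read off the dimension and the free coordinates from Theorem~\ref{thm:quotient2}. With $\mathcal{P}_0$ we have $t=m$, and each class is a single edge. The entry $A[i,j]=|F_j\cap E(B_i)|\bmod 2$ is then just the indicator that edge $e_j$ lies on basis cycle $B_i$. So $A$ is the cycle-basis~$\times$~edge incidence matrix, and its rows are the characteristic vectors of a basis of the cycle space $\mathcal{Z}(G)\subseteq\F_2^E$.

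Since these rows form a basis of $\mathcal{Z}(G)$, and $\dim\mathcal{Z}(G)=m-n+1=c$ for connected $G$ \cite[Ch.~14]{godsil-royle2001}, we get $\rho=c$. Theorem~\ref{thm:quotient2}(i) then gives $k=t-\rho=m-(m-n+1)=n-1$.

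Next I identify the quotient concretely. Fix a spanning tree $T$. Each non-tree edge $f$ closes a unique fundamental cycle $C_f$, and these cycles form a basis of $\mathcal{Z}(G)$. The key step is that the images $\pi(\chi_e)$, $e\in E(T)$, form a basis of $\F_2^E/\mathcal{Z}(G)$:
\begin{itemize}
\item \emph{Spanning.} Each non-tree $\chi_f$ is congruent modulo $\mathcal{Z}(G)$ to $\chi$ of the tree path joining its endpoints.
\item \emph{Independence.} No nonempty set of tree edges is a cycle, since $T$ is acyclic. Hence no nonzero combination of the $\chi_e$ with $e\in E(T)$ lies in $\mathcal{Z}(G)$.
\end{itemize}

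In these coordinates, tree edges carry the standard basis vectors of $\Z_2^{n-1}$, and non-tree edges carry the sum of the basis vectors along the tree path between their endpoints, i.e.\ composite labels. Evaluating \eqref{eq:label} along the tree path $r\to v$ gives exactly the naive spanning-tree labeling. So the naive construction is the finest quotient, independently of which tree is chosen; different trees differ only by a linear change of coordinates.

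Isometry then follows by citing Lemma~\ref{lem:join2}, and $k=n-1$ gives the universal bound. The one point needing a little care is the last claim. Universality means every connected graph has \emph{some} embedding with $k\le n-1$, which is what the argument proves. It is not a claim that $n-1$ is attained for every graph.
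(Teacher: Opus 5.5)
Your proposal is correct and follows the route the paper compresses into the statement itself: $A$ for $\mathcal{P}_0$ is the cycle-basis incidence matrix of rank $c=m-n+1$, Theorem~\ref{thm:quotient2}(i) gives $k=n-1$, the spanning-tree edges serve as free coordinates and recover the naive labeling, and Lemma~\ref{lem:join2} gives isometry. Your explicit check that the tree-edge images form a basis fills in a detail the paper leaves implicit; for independence, the precise fact is that every nonzero element of $\mathcal{Z}(G)$ is an even subgraph and so contains a cycle, although independence also follows at once from spanning plus the count $k=n-1$.
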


\subsection{The general abelian ($\Z$) case}

Over a general abelian group we use signed sums. For an oriented partition
with generators $g_j\in\Gamma$, the labeling is
$\phi(v)=\sum_{e\in P(r,v)}\pm g_{\mathrm{class}(e)}$, the sign $+$ for
forward traversal.

\begin{theorem}[$\Z$-cocycle condition]\label{thm:cocycleZ}
The labeling $\phi$ is well defined iff for every cycle of $G$, traversed in
a fixed rotational sense, the signed sum of class generators vanishes; and it
suffices to verify this on a cycle basis.
\end{theorem}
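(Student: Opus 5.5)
The plan is to mirror the binary proof of Theorem~\ref{thm:cocycle2}, replacing $\F_2$ chains by integer (oriented) chains. To each oriented walk $W$ in $G$ I associate its signed class vector $\sigma(W)\in\Z^t$, whose $j$-th coordinate counts forward traversals of edges of $F_j$ minus backward traversals. The label along $W$ is then $\sum_j \sigma(W)_j\, g_j$, a homomorphic image of $\sigma(W)$ under $\Z^t\to\Gamma$, $e_j\mapsto g_j$. Concatenating walks adds their vectors, and reversing a walk negates its vector. So everything reduces to showing that this homomorphism kills the vectors of closed walks.

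\emph{Necessity.} Suppose some cycle $C$, traversed in a fixed sense, has nonzero signed sum. Pick a vertex $w$ on $C$ and split $C$ at $w$ and at a second vertex $w'$ on $C$ into two $w$--$w'$ arcs $P_1,P_2$, so that $C=P_1\cdot P_2^{-1}$. Prepend a fixed $r$--$w$ path $Q$. The walks $QP_1$ and $QP_2$ are both $r$--$w'$ walks, and their labels differ by the signed sum over $C$, which is nonzero. Hence $\phi(w')$ is not well defined. If the definition of $\phi$ insists on simple paths rather than walks, I first choose $w$ to be a vertex of $C$ closest to $r$, or simply note that well-definedness over walks and over paths coincide once the sufficiency direction is established. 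This is the one technical point to check.

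\emph{Sufficiency.} Take two $r$--$v$ walks $W_1,W_2$. Then $W_1W_2^{-1}$ is a closed walk at $r$. I would show that the edge-level signed chain of any closed walk (in $\Z^E$, with the chosen orientations) lies in the integer cycle space $\ker\partial\subseteq\Z^E$. Two standard facts then apply:
\begin{itemize}
\item[(a)] a closed walk decomposes, by repeatedly excising the first repeated vertex, into a sum of oriented simple cycles plus back-and-forth traversals, which contribute zero;
\item[(b)] $\ker\partial$ is a free $\Z$-module of rank $c$ with a $\Z$-basis given by the fundamental cycles of any spanning tree.
\end{itemize}
The class map $\Z^E\to\Z^t\to\Gamma$ is linear. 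By (a), the hypothesis on all cycles gives vanishing on every closed walk, hence $\phi(v)$ is path-independent.

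\emph{Reduction to a basis.} The claim that it suffices to check a cycle basis is the step I expect to need care, because over $\Z$ an arbitrary $\F_2$ cycle basis need not generate the integer cycle lattice. For example, a non-unimodular basis generates only a sublattice of finite index, and the condition could then fail on a cycle with torsion. I would therefore interpret ``cycle basis'' as a $\Z$-basis of $\ker\partial$, such as a fundamental (spanning-tree) basis, for which (b) holds. Every integer cycle is then an integer combination of basis cycles, and linearity transfers vanishing from the basis to all cycles. To confirm that fundamental cycles form a $\Z$-basis, I use the fact that each fundamental cycle $C_e$ is the unique basis element containing its non-tree edge $e$. The coefficient of $C_e$ in any integer cycle $z$ is therefore forced to be $z_e$, and $z-\sum_e z_e C_e$ is an integer cycle supported on the tree, hence $0$.
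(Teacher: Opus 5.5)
Your proposal is correct and follows the paper's argument, with the details made explicit: two $r$--$v$ paths differ by an element of the integer cycle lattice, signed class sums are $\Z$-linear in the cycle, and vanishing transfers from a basis to all cycles (your choice of $w$ as a vertex of $C$ closest to $r$ also settles the simple-path form of necessity, which the paper leaves implicit). Your insistence that the basis be a $\Z$-basis of $\ker\partial$, such as a fundamental basis, is exactly the reading under which the paper's one-line claim that ``a cycle basis spans the cycle space over~$\Z$'' holds, since an arbitrary $\F_2$ cycle basis, once oriented, is only guaranteed to generate a sublattice of odd index, not necessarily~$1$.
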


\begin{proof}
Identical to Theorem~\ref{thm:cocycle2} with signed sums: two $r$--$v$ paths
differ by an element of the integer cycle space, signed cycle sums are
$\Z$-linear in the cycle, and a cycle basis spans the cycle space over~$\Z$.
\end{proof}

For a cycle basis $B_1,\dots,B_c$ define the signed cycle--class matrix
$A\in\Z^{c\times t}$ by $A[i,j]=$ net signed number of crossings of $F_j$ by
$B_i$ (forward $+1$, backward $-1$).

\begin{theorem}[$\Z$-Quotient Theorem]\label{thm:quotientZ}
Let $(\mathcal{P},\omega)$ be an oriented partition of the connected graph
$G$ with signed matrix $A$. Then:
\begin{itemize}
\item[(i)] the most generic consistent generator assignment takes values in
the finitely generated abelian group
$\Gamma_{\mathrm{univ}}=\Z^t/\mathrm{rowlattice}(A)$, with $g_j$ the image of
the $j$-th standard basis vector; every consistent assignment in any abelian
group is a homomorphic image of it;
\item[(ii)] the Smith normal form computes it: if
$U A^{\!\top} W=\mathrm{diag}(d_1,\dots,d_\rho,0,\dots)$ with $U,W$
unimodular, then
\[
\Gamma_{\mathrm{univ}}\cong\Z_{d_1}\times\cdots\times\Z_{d_\rho}\times
\Z^{\,t-\rho}
\]
(factors with $d_i=1$ trivial), and the coordinates of $g_j$ are read from
the $j$-th column of $U$, reduced modulo $d_i$ in the torsion coordinates;
\item[(iii)] with $S=\{\pm g_j:g_j\ne 0\}$, every $G$-path of length $\ell$
maps to a Cayley walk of length $\ell$, so
$d_{\Cay}(\phi(u),\phi(v))\le d_G(u,v)$: the only possible failure of
isometry is a shortcut.
\end{itemize}
\end{theorem}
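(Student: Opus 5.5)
The plan is to follow the binary proof of Theorem~\ref{thm:quotient2} step for step, replacing $\F_2$-vector spaces by $\Z$-modules. The Smith normal form will be used only in part~(ii).

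\emph{Part (i).} An assignment $j\mapsto g_j\in\Gamma$ is the same thing as a homomorphism $h\colon\Z^t\to\Gamma$ with $h(e_j)=g_j$, because $\Z^t$ is free on the $e_j$. By Theorem~\ref{thm:cocycleZ}, the assignment is consistent iff, for each basis cycle $B_i$, the signed sum $\sum_j A[i,j]\,g_j$ vanishes. In terms of $h$ this says $h$ kills the $i$-th row of $A$ for every $i$, i.e.\ $h$ kills $\mathrm{rowlattice}(A)$.

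The universal property of the quotient then gives a unique factorization $h=\bar h\circ\pi$ through $\pi\colon\Z^t\to\Gamma_{\mathrm{univ}}=\Z^t/\mathrm{rowlattice}(A)$. Two further facts finish (i):
\begin{itemize}
\item the assignment $g_j=\pi(e_j)$ is itself consistent, since $\pi$ kills the row lattice;
\item every consistent assignment is its homomorphic image under $\bar h$.
\end{itemize}
Finally, $\Gamma_{\mathrm{univ}}$ is finitely generated because it is a quotient of $\Z^t$.

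\emph{Part (ii).} We have $\mathrm{rowlattice}(A)=\mathrm{im}(A^{\!\top}\colon\Z^c\to\Z^t)$. Write $U A^{\!\top} W=D$ with $U,W$ unimodular. Since $W$ is invertible over $\Z$, $\mathrm{im}(A^{\!\top})=\mathrm{im}(A^{\!\top}W)$. Applying the automorphism $U$ of $\Z^t$ carries this lattice onto $\mathrm{im}(D)=d_1\Z\oplus\cdots\oplus d_\rho\Z\oplus 0^{t-\rho}$. Hence $U$ induces an isomorphism
\[
\Z^t/\mathrm{im}(A^{\!\top})\;\cong\;\Z_{d_1}\times\cdots\times\Z_{d_\rho}\times\Z^{\,t-\rho}.
\]
Under this isomorphism $\pi(e_j)$ corresponds to $Ue_j$, the $j$-th column of $U$, reduced modulo $d_i$ in the $i$-th coordinate for $i\le\rho$. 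Factors with $d_i=1$ are trivial and are dropped.

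\emph{Part (iii).} Consider a $G$-path $v_0,\dots,v_\ell$. Consecutive labels satisfy $\phi(v_{i+1})-\phi(v_i)=\pm g_{\mathrm{class}(e_i)}$, which lies in $S\cup\{0\}$. Deleting the zero steps leaves a Cayley walk of length at most $\ell$ from $\phi(v_0)$ to $\phi(v_\ell)$. Taking a shortest $u$--$v$ path gives $d_{\Cay}(\phi(u),\phi(v))\le d_G(u,v)$.

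One caveat: if $t>\rho$ the host group is infinite. Then $\Cay$ should be read with the word metric of Definition~\ref{def:cayley} on a finitely generated abelian group; the argument above is unchanged.

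\emph{Main obstacle.} The only step needing care is the justification inside Theorem~\ref{thm:cocycleZ} that the basis rows suffice. This requires a cycle basis that spans the integer cycle space over $\Z$, not merely over $\Q$. I would guarantee this by taking the fundamental cycle basis of a spanning tree, which is a $\Z$-basis of $H_1(G;\Z)$; with any other basis, $\mathrm{rowlattice}(A)$ could be a proper sublattice and $\Gamma_{\mathrm{univ}}$ would acquire spurious torsion. I would add a sentence recording this choice of basis and noting that the resulting group does not depend on which $\Z$-basis is used.
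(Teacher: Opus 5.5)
Your proposal is correct and follows the paper's proof step for step: the universal property of $\Z^t/\mathrm{rowlattice}(A)$ gives (i), the Smith normal form as a unimodular change of generator basis gives (ii) (you make the induced isomorphism $x\mapsto Ux$ more explicit than the paper does), and the observation that zero steps only shorten the walk gives (iii). Your caveat about needing an integral cycle basis sharpens the paper rather than exposing a gap in your own argument: the paper's proof of Theorem~\ref{thm:cocycleZ} simply asserts that a cycle basis spans the cycle space over $\Z$, which holds for fundamental bases but not for every cycle basis, and without it $\mathrm{rowlattice}(A)$ can be too small and $\Gamma_{\mathrm{univ}}$ can acquire spurious torsion.
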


\begin{proof}
(i) The relations imposed by Theorem~\ref{thm:cocycleZ} are exactly $Ag=0$;
the universal abelian group on generators $g_1,\dots,g_t$ subject to
$\Z$-linear relations $R$ is $\Z^t/\langle R\rangle$, and any other solution
defines a homomorphism $\Z^t\to\Gamma$ killing the row lattice, factoring
through the quotient. (ii) is the constructive structure theorem for
finitely generated abelian groups via the Smith normal form
\cite{stanley2016}: the unimodular column record $U$ is a change of generator
basis after which the relation lattice is diagonal. (iii) Consecutive labels
differ by $\pm g_{\mathrm{class}}\in S\cup\{0\}$; zeros only shorten the
walk.
\end{proof}

\begin{corollary}[The structural devices are SNF readouts]\label{cor:readout}
Under Theorem~\ref{thm:quotientZ}: (a) factor orders are computed, not
assigned: a class traversed $k$ times with consistent sign by some cycle and
crossed evenly by all others yields the relation $kg=0$, hence the torsion
factor $\Z_k$; (b) a generator whose $U$-column is supported on coordinates
$i_1,\dots,i_m$ is composite on exactly those dimensions; (c) the binary
theory is the case $2g=0$: appending the relations $2g_j=0$ reduces $A$
modulo $2$, and Theorem~\ref{thm:quotientZ} degenerates to
Theorem~\ref{thm:quotient2}.
\end{corollary}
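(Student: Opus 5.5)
The plan is to prove the three parts of Corollary~\ref{cor:readout} separately. Each is a direct reading of the universal presentation $\Gamma_{\mathrm{univ}}=\Z^t/\mathrm{rowlattice}(A)$ of Theorem~\ref{thm:quotientZ}. No new idea should be needed beyond tracking what each row of $A$ says.

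For (a), fix a class $F_j$ and suppose some basis cycle $B_i$ crosses $F_j$ exactly $k$ times, always with the same sign. Suppose also that every other row of $A$ has an even entry in column $j$; I read ``crossed evenly'' as ``net signed crossing zero'', which is the case in which the row reduces to $k e_j$. Then row $i$ contributes the relation $k g_j=0$, provided $B_i$ meets no other class with nonzero net sign. I would state this as the intended isolated situation, since otherwise the row mixes several generators and the torsion is only detected after the Smith reduction. In the isolated case, the SNF of Theorem~\ref{thm:quotientZ}(ii) has $e_j$ as an invariant direction, and the column is diagonalized with entry the gcd of the column $j$ entries. The other entries of that column are zero by hypothesis, so the gcd is $k$, and a factor $\Z_k$ appears with $g_j$ as its generator. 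The illustration is $K_3$: the single class is traversed three times in the same sense by the unique cycle, so $A=(3)$ and $\Gamma_{\mathrm{univ}}=\Z_3$. This matches Remark~\ref{rem:matching}.

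For (b), the argument is definitional. By Theorem~\ref{thm:quotientZ}(ii), the isomorphism $\Gamma_{\mathrm{univ}}\cong\prod\Z_{d_i}\times\Z^{t-\rho}$ sends $g_j$ to the $j$-th column of $U$, reduced modulo $d_i$ in the torsion coordinates. So the coordinates on which $g_j$ is nonzero are exactly the support of that reduced column. Being ``composite on those dimensions'' simply means having that support; the only care needed is to take the support after reduction modulo $d_i$ and to discard the trivial factors with $d_i=1$.

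For (c), I append the rows $2e_j$ for $j=1,\dots,t$ to $A$. The enlarged row lattice contains $2\Z^t$, so the quotient equals $\F_2^t/\overline{\mathrm{rowspan}}(A\bmod 2)$, by the third isomorphism theorem $\Z^t/(L+2\Z^t)\cong(\Z^t/2\Z^t)/\bar L$. Over $\F_2$, signs disappear because $-1\equiv1$, so the reduced signed matrix is exactly the parity matrix of Theorem~\ref{thm:quotient2}: net signed crossings are congruent mod $2$ to unsigned crossing counts. Hence the quotient and the images $g_j$ coincide with those of Theorem~\ref{thm:quotient2}(i). The walk statement (iii) transfers verbatim since $S=\{\pm g_j\}=\{g_j\}$.

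The main obstacle is making (a) honest. Its hypothesis must be pinned down so that the SNF provably outputs $\Z_k$, namely that column $j$ has gcd $k$ and that the relation decouples from other generators. The fallback is to phrase (a) as ``$g_j$ has order dividing $k$, with equality when the row is isolated''.
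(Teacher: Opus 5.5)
Your proposal is correct and takes the paper's route: (a) and (b) are read directly off the Smith normal form presentation, and (c) follows by appending the rows $2e_j$, so that the quotient becomes $\F_2^t/\mathrm{rowspan}_{\F_2}(A\bmod 2)$; your third-isomorphism and sign-disappearance remarks spell out the identification with the parity matrix that the paper leaves implicit. Your isolation hypothesis in (a) (row $i$ equal to $ke_j$, column $j$ zero elsewhere) is exactly what makes the paper's ``direct reading'' valid, through the splitting $\Z^t/\mathrm{rowlattice}(A)\cong\Z_k\oplus\Z^{t-1}/L'$ with $L'$ the lattice of the remaining rows; state the conclusion as ``$\langle g_j\rangle\cong\Z_k$ is a direct summand'' rather than ``the SNF has a factor $\Z_k$,'' since invariant factors can merge (e.g.\ $\Z_2\oplus\Z_3\cong\Z_6$).
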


\begin{proof}
(a),(b) are direct readings of the SNF data. (c) Adding rows $2e_j$ to the
relation lattice makes every coordinate $2$-torsion; the quotient is
$\F_2^t/\mathrm{rowspan}_{\F_2}(A\bmod 2)$, the $\F_2$ quotient.
\end{proof}

\begin{figure}[t]
\centering
\includegraphics[width=.98\linewidth]{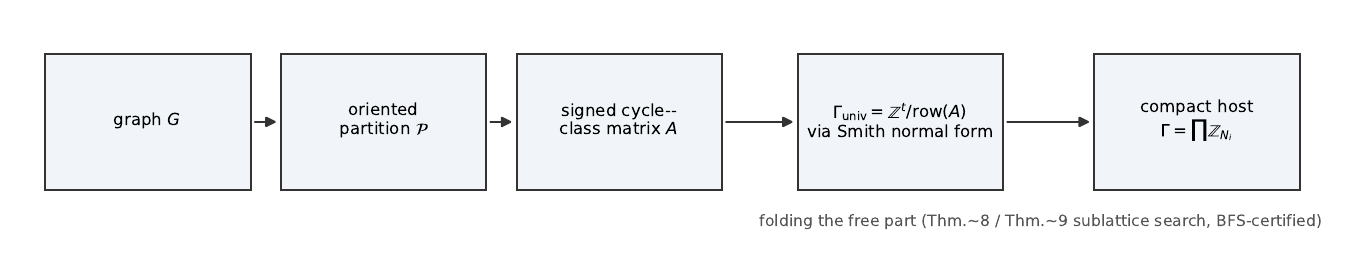}
\caption{The embedding pipeline. An oriented partition of $G$ yields a
signed cycle--class matrix $A$; the Smith normal form computes the universal
group $\Gamma_{\mathrm{univ}}=\Z^t/\mathrm{rowlattice}(A)$ and the
generators; folding the free part to a finite-index sublattice
(Section~\ref{sec:compact}) produces the compact host.}
\label{fig:pipeline}
\end{figure}

\begin{lemma}[$\Z$-Join Lemma]\label{lem:joinZ}
For the all-singleton oriented partition,
$\Gamma_{\mathrm{univ}}\cong\Z^{\,n-1}$ and the quotient labeling is
isometric on all pairs (with the infinite host
$\Cay(\Z^{\,n-1},S\cup-S)$).
\end{lemma}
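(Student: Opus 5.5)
The plan is to identify the relation lattice with the integer cycle lattice and to replace the parity argument of Lemma~\ref{lem:join2} by an integer-flow argument. I will take the cycle basis $B_1,\dots,B_c$ to be the fundamental cycles of a spanning tree $T$; this choice is forced, as explained next. Write $\partial\colon\Z^{E}\to\Z^{V}$ for the signed boundary map of the fixed orientation, so $\partial\chi_{(a\to b)}=\chi_b-\chi_a$.

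\emph{Step 1: the group.} The integer cycle lattice is $\ker\partial$, and the fundamental cycles form a $\Z$-basis of it. Indeed, an element of $\ker\partial$ equals the integer combination of fundamental cycles given by its values on the non-tree edges, since the difference is a circulation supported on the forest $T$ and hence zero. So $\mathrm{rowlattice}(A)=\ker\partial$. For an arbitrary cycle basis, which is only an $\F_2$-basis, the row lattice could have finite index in $\ker\partial$ and introduce spurious torsion. Hence
\[
\Gamma_{\mathrm{univ}}=\Z^{E}/\ker\partial\cong\mathrm{im}\,\partial .
\]
By connectivity, $\mathrm{im}\,\partial=\{x\in\Z^V:\sum_v x_v=0\}\cong\Z^{n-1}$. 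In particular $\Gamma_{\mathrm{univ}}$ is torsion-free, which also follows from the Smith normal form of Theorem~\ref{thm:quotientZ}(ii) having all $d_i=1$. Under this identification $\phi(v)-\phi(u)=\partial\chi_P=\chi_v-\chi_u$ for any $u$--$v$ path $P$. Each generator is $g_e=\chi_b-\chi_a\neq 0$ for $e=(a\to b)$. Distinct vertices get distinct labels, so $\phi$ is injective.

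\emph{Step 2: isometry.} Theorem~\ref{thm:quotientZ}(iii) gives $d_{\Cay}\le d_G$, so it remains to prove $d_{\Cay}\ge d_G$. Take a geodesic word $\epsilon_1 g_{e_1}+\dots+\epsilon_\ell g_{e_\ell}=\phi(v)-\phi(u)$ with $\epsilon_i=\pm1$. If several edges share the same generator value, fix one representative edge per letter. Lift the word to $x=\sum_i\epsilon_i\chi_{e_i}\in\Z^E$, so that $\|x\|_1\le\ell$ and $\partial x=\chi_v-\chi_u$. Now read $x$ as an integer flow in which each edge $e$ carries $|x_e|$ units in the direction given by the sign of $x_e$. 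This is a flow of value $1$ from $u$ to $v$.

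By standard flow decomposition, the flow splits into one directed $u$--$v$ path plus directed cycles, all using edges in the support of $x$ with total edge count at most $\|x\|_1$. Hence
\[
d_G(u,v)\le \|x\|_1\le \ell=d_{\Cay}(\phi(u),\phi(v)).
\]
Equivalently, as in Lemma~\ref{lem:join2}, one can argue that $u$ and $v$ lie in the same component of the support of $x$, because the net flow out of that component must vanish. This still requires counting the edges used, which the decomposition handles.

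\emph{Main obstacle.} The delicate point is Step 1: ensuring the row lattice is the full saturated lattice $\ker\partial$ and not a finite-index sublattice. This is what fixes the cycle basis to be fundamental (or, more generally, any $\Z$-basis of the cycle lattice). Once the quotient is identified with $\mathrm{im}\,\partial$, the flow bound in Step 2 is routine. Unlike the binary case, it needs no analogue of Lemma~\ref{lem:geodindep}, because repeated letters are harmless in the $\ell_1$ count.
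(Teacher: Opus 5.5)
Your proposal is correct and follows the paper's own proof. Like the paper, you identify $\Gamma_{\mathrm{univ}}=\Z^{E}/\ker\partial\cong\Z^{V}_0\cong\Z^{\,n-1}$ through the boundary map, lift a geodesic word to an integer vector $x$ with $\partial x=\chi_v-\chi_u$, and apply flow decomposition to get $d_G(u,v)\le|P|\le\|x\|_1\le\ell$. Your choice to build $A$ from a $\Z$-basis of the cycle lattice (for instance fundamental cycles) is a genuine refinement: the paper simply asserts that the relation lattice is $\ker\partial$, and this can fail, introducing spurious torsion, for an arbitrary $\F_2$ cycle basis.
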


\begin{proof}
With one class per oriented edge, $\Z^t=\Z^{E}$ is the (oriented) edge
module. The relation lattice is the integer cycle lattice
$\mathcal{Z}_\Z(G)=\ker\partial$, where
$\partial\colon\Z^{E}\to\Z^{V}_0$ is the boundary map sending an oriented
edge $u\to v$ to $\delta_v-\delta_u$, and $\Z^{V}_0$ denotes the sum-zero
sublattice of $\Z^{V}$, of rank $n-1$. Since $G$ is connected, $\partial$ is
surjective onto $\Z^{V}_0$, so
$\Gamma_{\mathrm{univ}}=\Z^{E}/\ker\partial\cong\Z^{V}_0\cong\Z^{\,n-1}$,
and under this isomorphism the label of $v$ is
$\phi(v)=\delta_v-\delta_r$, while the generator of the oriented edge
$u\to v$ is $\delta_v-\delta_u$.

Now let $\ell=d_{\Cay}(\phi(u),\phi(v))$ and let a geodesic word realize it;
summing its letters, the word is an integer vector $f\in\Z^{E}$ with
$\partial f=\delta_v-\delta_u$ and $\ell_1$-norm $\|f\|_1=\ell$ (each letter
contributes one unit to one oriented-edge coordinate). By the integral flow
decomposition theorem (see, e.g., \cite[Ch.~19]{schrijver1986}), any integer
vector $f$ with $\partial f=\delta_v-\delta_u$ decomposes as
$f=\chi_P+\sum_j c_j$, where $\chi_P$ is the signed characteristic vector of
a directed $u$--$v$ path and each $c_j$ is a signed circulation
($\partial c_j=0$), and the decomposition can be chosen
\emph{sign-coherent} with $f$: every nonzero coordinate of $\chi_P$ and of
each $c_j$ has the same sign as the corresponding coordinate of $f$.
Sign-coherence gives additivity of $\ell_1$-norms,
$\|f\|_1=\|\chi_P\|_1+\sum_j\|c_j\|_1\ge\|\chi_P\|_1=|P|\ge d_G(u,v)$.
Hence $\ell\ge d_G(u,v)$, and with Theorem~\ref{thm:quotientZ}(iii),
equality holds.
\end{proof}

\section{Compactification and existence}
\label{sec:compact}

$\Gamma_{\mathrm{univ}}$ may have free factors $\Z^f$; a finite host requires
a further quotient by a finite-index sublattice $L\subseteq\Z^f$. Folding is
itself an instance of Theorem~\ref{thm:quotientZ}: appending the rows of $L$
(pulled back to class coordinates) to $A$ and recomputing the SNF yields the
finite group $\Gamma_L$ and its labels. Quotients never stretch, so folding
preserves $d_{\Cay}\le d_G$ and can only introduce wraparound shortcuts.

\begin{theorem}[Sufficient diagonal fold]\label{thm:fold}
Let $R_i$ be the range of free coordinate $i$ over the vertex labels, i.e.\
$R_i=\max_v \phi(v)_i-\min_v\phi(v)_i$, and let $\gamma_i=\max_j|(g_j)_i|$.
The diagonal fold $L=N_1\Z\times\cdots\times N_f\Z$ is isometric whenever
$N_i>R_i+\diam(G)\,\gamma_i$ for every~$i$.
\end{theorem}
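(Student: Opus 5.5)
The plan is a lifting argument. I read the statement with the standing hypothesis that the unfolded labeling $\phi\colon V\to\Gamma_{\mathrm{univ}}\cong T\times\Z^f$ is isometric for $\Cay(\Gamma_{\mathrm{univ}},S)$, where $T$ is the torsion part. This holds for the finest partition by Lemma~\ref{lem:joinZ}, and in general it is the situation in which folding is applied. Since the statement does not say this explicitly, I will make the assumption explicit at the start of the proof.

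First, I identify the folded group. Appending the rows of $L$, pulled back to class coordinates, to $A$ and recomputing the Smith normal form yields exactly $\Gamma_L=\Gamma_{\mathrm{univ}}/(0\times L)$. This holds because a quotient of a quotient is a quotient by the sum of the relation lattices. I write $\bar\phi$, $\bar g_j$ and $\bar S$ for the images under the projection $p\colon\Gamma_{\mathrm{univ}}\to\Gamma_L$. By Theorem~\ref{thm:quotientZ}(iii) applied to the enlarged matrix, $d_{\Gamma_L}(\bar\phi(u),\bar\phi(v))\le d_G(u,v)$ already holds. So only the reverse inequality, together with injectivity, remains to be shown.

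The key step is to lift a short word. Fix $u,v$ and suppose, for contradiction, that $\bar\phi(v)-\bar\phi(u)$ equals a sum of $\ell<d_G(u,v)$ letters $\pm\bar g_{j_1},\dots,\pm\bar g_{j_\ell}$. I lift each letter to the corresponding $\pm g_{j}$ and let $w\in\Gamma_{\mathrm{univ}}$ be their sum. Then $w-(\phi(v)-\phi(u))$ lies in $0\times L$: its torsion part is zero and its $i$-th free coordinate is a multiple of $N_i$. Now I bound that coordinate. Each letter contributes at most $\gamma_i$ in absolute value, so $|w_i|\le\ell\gamma_i<\diam(G)\,\gamma_i$; this uses $\ell<d_G(u,v)\le\diam(G)$. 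By the definition of $R_i$, $|\phi(v)_i-\phi(u)_i|\le R_i$. Hence the difference has absolute value less than $R_i+\diam(G)\gamma_i<N_i$, so it is $0$. Therefore $w=\phi(v)-\phi(u)$ exactly in $\Gamma_{\mathrm{univ}}$. This gives a Cayley walk of length at most $\ell<d_G(u,v)$ in the unfolded host, contradicting the isometry of $\phi$. A zero letter only shortens the walk further, so it does not affect the argument.

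Injectivity is the same computation with $\ell=0$. If $\bar\phi(u)=\bar\phi(v)$, then $\phi(v)-\phi(u)\in 0\times L$ with every free coordinate of absolute value at most $R_i<N_i$, so $\phi(u)=\phi(v)$ and hence $u=v$. I must also check that the folded generator set is admissible, meaning that no $\bar g_j$ used on an edge becomes $0$. If one did, the endpoints of that edge would be identified, which contradicts injectivity. More generally, zero letters are simply dropped from $\bar S$, as in Theorem~\ref{thm:quotientZ}(iii).

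I expect the main obstacle to be bookkeeping rather than an inequality. I need to be sure that the coordinates in which $R_i$ and $\gamma_i$ are measured are the Smith normal form free coordinates of $\Gamma_{\mathrm{univ}}$, and that $L$ acts only on those coordinates. This matters because the bound $|w_i|\le\ell\gamma_i$ requires each letter's $i$-th coordinate to be an honest integer of absolute value at most $\gamma_i$, not a residue. I will fix integer representatives of the $g_j$ in $T\times\Z^f$ once at the start of the proof and use them throughout.
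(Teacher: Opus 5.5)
Your proposal is correct and follows the paper's proof essentially verbatim: you lift the short folded word to $\Gamma_{\mathrm{univ}}$, bound the $i$-th free coordinate of the discrepancy in $L$ by $R_i+\diam(G)\gamma_i<N_i$ to force it to vanish, and contradict isometry of the unfolded labeling. The paper cites Lemma~\ref{lem:joinZ} at that last step, which tacitly restricts to the finest partition, so your explicit standing hypothesis is the more accurate reading; your separate injectivity check is just the $\ell=0$ case the paper absorbs into its shortcut argument, and your strict inequality $\ell\gamma_i<\diam(G)\gamma_i$ should read $\le$ when $\gamma_i=0$, which is harmless because the hypothesis $N_i>R_i+\diam(G)\gamma_i$ is already strict.
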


\begin{proof}
Suppose the folded labeling fails to be isometric for some pair $u,v$. By
Theorem~\ref{thm:quotientZ}(iii) applied to the folded group, the failure is
a shortcut: a word of length $\ell<d_G(u,v)\le\diam(G)$ joining the folded
labels. Lift the word letter by letter to $\Gamma_{\mathrm{univ}}$, starting
at $\phi(u)$: the lift ends at $\phi(v)+\lambda$ for some $\lambda\in L$. We
claim $\lambda=0$. Fix a free coordinate $i$ and write the $i$-th coordinate
of the lifted endpoint in two ways. On one hand, each of the $\ell$ letters
changes coordinate $i$ by at most $\gamma_i$ in absolute value, so
\[
\bigl|\,(\phi(v)+\lambda)_i-\phi(u)_i\,\bigr|\le\ell\,\gamma_i
\le\diam(G)\,\gamma_i.
\]
On the other hand, if $\lambda_i=q_iN_i$ with $q_i\ne 0$, then
\[
\bigl|\,(\phi(v)+\lambda)_i-\phi(u)_i\,\bigr|
\ge|q_i|N_i-\bigl|\phi(v)_i-\phi(u)_i\bigr|\ge N_i-R_i
>\diam(G)\,\gamma_i,
\]
using the hypothesis in the last step. The two displays contradict each
other, so $q_i=0$ for every $i$, i.e.\ $\lambda=0$. But then the lifted word
joins $\phi(u)$ to $\phi(v)$ in $\Gamma_{\mathrm{univ}}$ with length
$\ell<d_G(u,v)$, contradicting Lemma~\ref{lem:joinZ}. Hence no shortcut
exists and the fold is isometric.
\end{proof}

\begin{theorem}[Sublattice compactification; diagonal folds can miss the
optimum]\label{thm:sublattice}
The isometric finite quotients of a universal embedding are exactly the
finite-index sublattices $L\subseteq\Z^f$ whose folds pass the exact check,
and the minimal host over them is found by enumerating Hermite-normal-form
bases in increasing index, each verified by breadth-first search in the
finite Cayley graph. Restricting to diagonal $L$ can miss the optimum: for
the diamond graph the universal embedding has $f=2$ with labels
$(0,0),(1,0),(1,1),(2,1)$; \emph{every} diagonal fold of index $6$ fails,
yet the non-diagonal sublattice $L=\langle(3,0),(1,2)\rangle$ of index $6$
yields an isometric host of order $6$---the octahedron
$\Cay(\Z_2\times\Z_3,\{\pm(0,1),\pm(1,1)\})$.
\end{theorem}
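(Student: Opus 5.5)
The first assertion is essentially a restatement of what has already been proved; the content lies in the diamond example, which I will treat as an explicit finite computation.

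\emph{General part.} A finite quotient of $\Gamma_{\mathrm{univ}}$ that keeps the torsion part is $\Gamma_{\mathrm{univ}}/L$ for a subgroup $L$ of the free part $\Z^f$. The quotient is finite exactly when $L$ has finite index. By Theorem~\ref{thm:quotientZ}(iii), applied to the folded group as in the proof of Theorem~\ref{thm:fold}, the folded labeling never stretches distances. So $L$ is isometric precisely when the folded labels are pairwise distinct and no pair admits a shortcut. Both conditions are decided exactly by a breadth-first search from the label of each vertex in the finite Cayley graph. Every finite-index sublattice has a unique Hermite-normal-form basis, and only finitely many sublattices have a given index. Moreover, Theorem~\ref{thm:fold} supplies an isometric diagonal fold of some finite index $N^\ast$. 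Hence enumerating HNF bases by increasing index up to $N^\ast$ terminates, and the first sublattice that passes the check is optimal.

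\emph{Diamond, setup.} Write the labels as $x=(0,0)$, $y=(1,0)$, $z=(1,1)$, $w=(2,1)$, where $x,w$ are the two degree-$2$ vertices at distance $2$. The edges and their generators are then:
\begin{itemize}
\item $xy$ and $zw$ carry $(1,0)$;
\item $xz$ and $yw$ carry $(1,1)$;
\item $yz$ carries $(0,1)$.
\end{itemize}
So $S=\pm\{(1,0),(1,1),(0,1)\}$. The diagonal sublattices of index $6$ are $N_1\Z\times N_2\Z$ with $N_1N_2=6$, giving four cases.
\begin{itemize}
\item $(6,1)$: the second coordinate is killed, so $y\equiv z$ and injectivity fails.
\item $(1,6)$: the first coordinate is killed, so $x\equiv y$ and injectivity fails.
\item $(2,3)$: $w-x=(2,1)\equiv(0,1)\in S$, so the pair $x,w$ gets a shortcut of length $1$.
\item $(3,2)$: $w-x=(2,1)\equiv(-1,-1)\in -S$, again a shortcut of length $1$.
\end{itemize}
Hence every diagonal fold of index $6$ fails.

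\emph{Diamond, the non-diagonal fold.} Take $L=\langle(3,0),(1,2)\rangle$, of index $\det=6$. A vector $(a,b)$ lies in $L$ iff $b$ is even and $a-b/2\equiv0\pmod 3$. Injectivity requires that the pairwise label differences $\pm(1,0),\pm(0,1),\pm(1,1),\pm(2,1)$ avoid $L$. The first three have odd second coordinate or nonzero first coordinate with second coordinate $0$, and $(2,1)$ has odd second coordinate, so none lies in $L$. The only non-adjacent pair is $x,w$, and it is not shortcut iff $(2,1)\mp s\notin L$ for every $s\in\{(1,0),(0,1),(1,1)\}$. These six differences are $(1,1),(3,1),(2,0),(2,2),(1,0),(3,2)$. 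Each fails the membership test: either the second coordinate is odd, or it is even and the first test fails, as for $(2,0)$ and $(2,2)$. So the fold is isometric.

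\emph{Identifying the host.} For the octahedron I use the homomorphism $(a,b)\mapsto(b\bmod2,\,(a+b)\bmod3)$ to $\Z_2\times\Z_3$. It kills $(3,0)$ and $(1,2)$, and it is onto, so by index count it induces $\Z^2/L\cong\Z_2\times\Z_3$. It sends $(1,0)\mapsto(0,1)$, $(0,1)\mapsto(1,1)$ and $(1,1)\mapsto(1,2)=-(1,1)$. Thus $S$ becomes $\{\pm(0,1),\pm(1,1)\}$, which has four elements, and $\Cay$ of this set on six vertices is $4$-regular, i.e.\ $K_6$ minus a perfect matching: the octahedron.

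The only delicate step is the bookkeeping of membership in $L$ for the shortcut differences. The criterion above ($b$ even and $a\equiv b/2\pmod 3$) makes each of these a one-line check.
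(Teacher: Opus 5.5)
Your proposal is correct and follows essentially the paper's route. The general claim rests on non-stretching of folds, exactness of the BFS check and HNF enumeration, and the diamond claims are finite checks. You carry those checks out by hand (explicit collapse or shortcut witnesses for the four diagonal folds, the membership criterion for $L$, and an explicit isomorphism $\Z^2/L\cong\Z_2\times\Z_3$), where the paper largely reports them as computer-verified. Conversely, the paper derives $\Gamma_{\mathrm{univ}}\cong\Z^2$ and the labels from the oriented partition, a one-line step you take as given: both triangles impose the single relation that the generator of $xz,yw$ is the sum of those of $xy,zw$ and $yz$.

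One inference is left implicit in your argument, and also in the paper's proof, which defers it to the exhaustive search in Example~\ref{ex:diamond}. The conclusion ``can miss the optimum'' also needs every diagonal fold of index at most $5$ to fail. This is immediate: index $\le 3$ cannot separate four labels, any $\mathrm{diag}(1,N)$ or $\mathrm{diag}(N,1)$ collapses two labels, and $\mathrm{diag}(2,2)$ sends $(2,1)$ to $(0,1)\in S$.
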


\begin{proof}
Any finite abelian quotient of $\Gamma_{\mathrm{univ}}$ that is injective and
distance-preserving on the labels arises from a finite-index sublattice of
the free part; Hermite-normal-form bases enumerate sublattices by index
without repetition, and the per-candidate BFS check is exact (a wraparound
shortcut can only \emph{fail} the check, never falsely pass it).

The diamond assertions are finite verifications, which we report in full so
that they are reproducible. The universal labels above arise from the
oriented partition $\{0{\to}1,2{\to}3\}$, $\{0{\to}2,1{\to}3\}$,
$\{1{\to}2\}$ of the diamond with vertex $0$ opposite vertex $3$; the signed
cycle--class matrix for the two triangle cycles is
$\bigl(\begin{smallmatrix}1&-1&1\\-1&1&-1\end{smallmatrix}\bigr)$, of rank
$1$ with trivial invariant factor, so
$\Gamma_{\mathrm{univ}}\cong\Z^2$. The four diagonal sublattices of index
$6$---$\mathrm{diag}(1,6)$, $\mathrm{diag}(2,3)$, $\mathrm{diag}(3,2)$,
$\mathrm{diag}(6,1)$---each create a wraparound shortcut and fail the exact
check (all four verified computationally). Among all Hermite-normal-form
sublattices of index $6$, exactly two pass:
$\langle(3,0),(1,2)\rangle$ and $\langle(6,0),(5,1)\rangle$. For the first,
the folded labels are $(0,0),(0,2),(1,1),(1,0)$ in
$\Z^2/L\cong\Z_2\times\Z_3$, the edge differences generate
$S=\{\pm(0,1),\pm(1,1)\}$, and the resulting host
$\Cay(\Z_2\times\Z_3,S)$---the octahedron---is isometric on all six pairs:
the unique diamond non-edge $\{0,3\}$ maps to the antipodal pair
$(0,0),(1,0)$ at Cayley distance $2$, and the five edges map to adjacent
pairs. (Figure~\ref{fig:diamond}.)
\end{proof}

\begin{figure}[t]
\centering
\includegraphics[width=.9\linewidth]{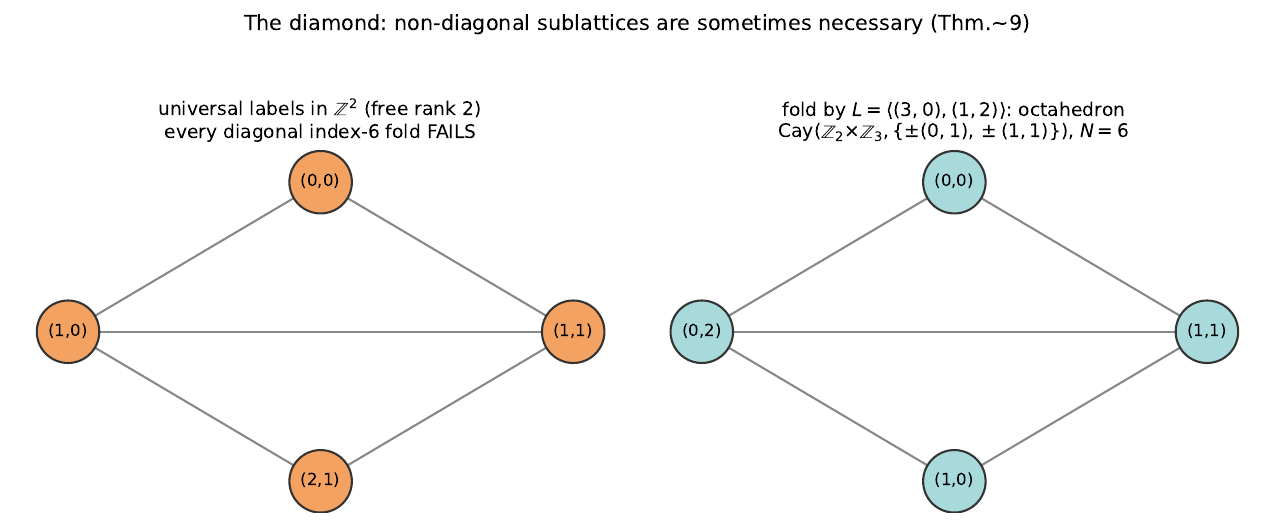}
\caption{The diamond, before and after the non-diagonal fold of
Theorem~\ref{thm:sublattice}. Left: universal labels in $\Z^2$; every
diagonal fold of index $6$ fails. Right: the fold by
$L=\langle(3,0),(1,2)\rangle$ gives the order-$6$ octahedron
$\Cay(\Z_2\times\Z_3,\{\pm(0,1),\pm(1,1)\})$, isometric on all pairs.}
\label{fig:diamond}
\end{figure}

\begin{theorem}[Existence, classical baseline]\label{thm:existence}
Every finite connected graph admits an isometric embedding into a Cayley
graph of a finite abelian group, with host order at most $2^{\,n-1}$.
\end{theorem}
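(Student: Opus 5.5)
The plan is to read the existence statement off Corollary~\ref{cor:naive}, which already rests on the binary Join Lemma~\ref{lem:join2}. Take the all-singleton partition $\mathcal{P}_0$ of $E(G)$. By Theorem~\ref{thm:quotient2}(i), its quotient assignment lives in $\Z_2^{k}$ with $k=m-\rk_{\F_2}(A)$. For $\mathcal{P}_0$ the matrix $A$ is the cycle-basis incidence matrix, of rank $c=m-n+1$, so $k=n-1$. The host is then $\Cay(\Z_2^{n-1},S)$ with $S=\{g_j:g_j\neq 0\}$. Since every generator is an involution, $S=-S$ automatically.

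Three points must be checked for the host to meet Definition~\ref{def:cayley}.
\begin{itemize}
\item[(a)] $0\notin S$ holds by construction.
\item[(b)] $S$ generates $\Z_2^{n-1}$. Identify the quotient with $\F_2^E/\mathcal{Z}(G)$; the images of the single edges span the quotient because the $\chi_e$ span $\F_2^E$. Concretely, the images of the $n-1$ spanning-tree edges are the standard basis vectors.
\item[(c)] The labeling $\lambda$ is injective.
\end{itemize}
Point (c) follows from isometry. Lemma~\ref{lem:join2} gives $d_{\Cay}(\lambda(u),\lambda(v))=d_G(u,v)$, which is positive for $u\ne v$, so $\lambda(u)\neq\lambda(v)$. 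Together with Theorem~\ref{thm:cocycle2}, which makes $\lambda$ well defined, this shows $\lambda$ is an isometric embedding in the sense of Definition~\ref{def:isometric}. The host order is $|\Z_2^{n-1}|=2^{n-1}$.

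The only step needing care is (b) together with the well-definedness bookkeeping. In particular, no edge generator is zero, since single edges of a connected graph are never cycles over $\F_2$ unless they are loops, which we exclude. Even if a generator were zero, it would be harmless for generation, and isometry already prevents it.

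As a cross-check and alternative route, the same bound follows from the spanning-tree description. Label $v$ by the indicator vector in $\F_2^{T}$ of the tree path from $r$ to $v$, with $|T|=n-1$. Lemma~\ref{lem:join2} certifies isometry for this labeling through Corollary~\ref{cor:naive}.
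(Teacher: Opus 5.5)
Your proposal is correct and follows essentially the same route as the paper, which simply invokes Corollary~\ref{cor:naive}: the binary quotient of the all-singleton partition lands in $\Z_2^{\,n-1}$ and is isometric by Lemma~\ref{lem:join2}. Your extra checks ($0\notin S$, $S=-S$, $S$ generating, and injectivity deduced from isometry) are the bookkeeping the paper leaves implicit, and they are sound.
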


\begin{proof}
Apply the binary quotient of the finest partition: by
Corollary~\ref{cor:naive} the labeling into $\Z_2^{\,n-1}$ is isometric, of
order $2^{\,n-1}$.
\end{proof}

\begin{remark}[What Theorem~\ref{thm:existence} is and is not]\label{rem:baseline}
The existence statement and the bound $2^{\,n-1}$ are not new: the
spanning-tree labeling is classical, going back to the earliest work on
Boolean-cube embeddings \cite{firsov1965}, and
Theorem~\ref{thm:existence} merely records that the quotient framework
re-derives it as its degenerate case. We state it for completeness because
it anchors the theory---every graph has \emph{some} abelian host, so
minimization is always well posed---but the contribution of this paper is
the machinery for finding hosts \emph{below} the baseline, not the baseline
itself. The companion paper \cite{fokam-p2} proves the bound is tight (odd
cycles) and quantifies how often it can be beaten.
\end{remark}

\subsection{A product rule}
\label{subsec:product}

Isometric abelian hosts compose across Cartesian products. The ingredients
of the following statement are classical---the distance in a Cartesian
product is the sum of the factor distances \cite{imrich-klavzar2000,
hammack2011}, and a product of abelian Cayley graphs is again an abelian
Cayley graph---but recording their combination gives a genuinely useful
construction tool, and it explains in one line the grid-into-torus
embeddings used at scale in the companion signal-processing papers
\cite{fokam-p3,fokam-p4}.

\begin{proposition}[Product rule]\label{prop:product}
For $i=1,\dots,d$ let $\phi_i\colon V(G_i)\to\Gamma_i$ be an isometric
embedding of the connected graph $G_i$ into $\Cay(\Gamma_i,S_i)$. Then
\[
\Cay(\Gamma_1,S_1)\,\square\,\cdots\,\square\,\Cay(\Gamma_d,S_d)
=\Cay\bigl(\Gamma_1\times\cdots\times\Gamma_d,\;S\bigr),
\qquad
S=\bigcup_{i=1}^{d}\{0\}^{i-1}\!\times S_i\times\{0\}^{d-i},
\]
and $\phi=(\phi_1,\dots,\phi_d)$ is an isometric embedding of the Cartesian
product $G_1\,\square\,\cdots\,\square\,G_d$ into
$\Cay(\prod_i\Gamma_i,S)$. Consequently, if $G$ embeds isometrically into a
Cartesian product $H_1\,\square\,\cdots\,\square\,H_d$ and each $H_i$
embeds isometrically into an abelian Cayley graph, then $G$ embeds
isometrically into an abelian Cayley graph of order $\prod_i|\Gamma_i|$.
\end{proposition}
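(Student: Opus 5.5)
The proof has three parts: the identification of the product host with a Cayley graph, isometry of $\phi$, and the consequence. For the first, both sides have vertex set $\Gamma=\prod_i\Gamma_i$. In the Cartesian product, $(g_1,\dots,g_d)$ is adjacent to $(h_1,\dots,h_d)$ exactly when the tuples agree in all coordinates but one, say $i$, and $h_i-g_i\in S_i$. This says precisely that $h-g$ lies in the $i$-th summand $\{0\}^{i-1}\times S_i\times\{0\}^{d-i}$ of $S$. We also check that $S=-S$, which follows from $S_i=-S_i$, that $0\notin S$, and that $S$ generates $\Gamma$ because each $S_i$ generates $\Gamma_i$. So $\Cay(\Gamma,S)$ satisfies Definition~\ref{def:cayley}, and its edges coincide with those of the product.

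For isometry I will use the classical fact that distance in a Cartesian product is the sum of the factor distances. I apply it twice. In the domain, $d_{G_1\square\cdots\square G_d}(u,v)=\sum_i d_{G_i}(u_i,v_i)$. In the host, by the identification above, $d_{\Cay(\Gamma,S)}(\phi(u),\phi(v))=\sum_i d_{\Cay(\Gamma_i,S_i)}(\phi_i(u_i),\phi_i(v_i))$. Each summand on the right equals $d_{G_i}(u_i,v_i)$ because $\phi_i$ is isometric, so the two sums agree.

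If a self-contained argument for the host identity is wanted, the word-norm form is direct. A word in $S$ splits by support into subwords in each $S_i$, and abelianness lets us reorder the letters freely. Hence $|x|_S=\sum_i|x_i|_{S_i}$: the inequality $\le$ comes from concatenating geodesic words for the coordinates, and $\ge$ comes from projecting an arbitrary word onto each coordinate. Injectivity of $\phi$ is either inherited coordinatewise from the $\phi_i$ or follows from isometry, since distinct vertices are at positive distance.

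For the consequence, let $\psi\colon G\to H_1\square\cdots\square H_d$ and $\phi_i\colon H_i\to\Cay(\Gamma_i,S_i)$ be isometric. By the first part, $(\phi_1,\dots,\phi_d)\circ\psi$ is isometric, since a composition of isometric injections is an isometric injection. Its host has order $\prod_i|\Gamma_i|$. Connectivity of each $H_i$ is automatic, because it receives an isometric embedding into a connected graph.

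I expect no real obstacle. The only point needing care is the lower bound $|x|_S\ge\sum_i|x_i|_{S_i}$, i.e.\ that a geodesic in the product never profits from mixing coordinates. This is where abelianness and the fact that $S$ is a disjoint union of coordinate pieces are used, and it is handled by the projection argument.
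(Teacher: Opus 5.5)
Your proposal is correct and follows essentially the same route as the paper. Both identify adjacency coordinatewise, then get additivity of the word norm by splitting a word into per-block subwords (lower bound) and concatenating factor geodesics (upper bound), then combine this with additivity of product distance and the isometry of each $\phi_i$, and finally compose for the consequence. Your additional checks ($S=-S$, $0\notin S$, generation, injectivity, connectivity of the $H_i$) are details the paper leaves implicit.
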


\begin{proof}
The identification of the product of Cayley graphs is immediate: two
product vertices are adjacent iff they differ in exactly one coordinate by
a generator of that coordinate's factor, which is adjacency in
$\Cay(\prod\Gamma_i,S)$. The word metric of $S$ is additive across blocks:
each letter of $S$ moves exactly one coordinate, so a word for
$(a_1,\dots,a_d)$ partitions into subwords for each $a_i$, giving
$|(a_1,\dots,a_d)|_S\ge\sum_i|a_i|_{S_i}$, while concatenating factor
geodesics achieves equality. The distance in the Cartesian product of
graphs is likewise additive,
$d_{\square G_i}\bigl((u_i),(v_i)\bigr)=\sum_i d_{G_i}(u_i,v_i)$
\cite{imrich-klavzar2000}. Hence
\[
d_{\Cay}\bigl(\phi(u),\phi(v)\bigr)
=\sum_i\bigl|\phi_i(v_i)-\phi_i(u_i)\bigr|_{S_i}
=\sum_i d_{G_i}(u_i,v_i)
=d_{\square G_i}(u,v),
\]
using the isometry of each $\phi_i$ in the middle step. The final
statement is the composition of two isometric embeddings.
\end{proof}

\begin{corollary}[Submultiplicativity]\label{cor:product}
$\nu(G_1\,\square\,\cdots\,\square\,G_d)\le\prod_i\nu(G_i)$ and
$k_{\min}(G_1\,\square\,\cdots\,\square\,G_d)\le\sum_i k_{\min}(G_i)$.
In particular the $n\times n$ grid $P_n\,\square\,P_n$ embeds
isometrically into the torus $C_{2n-2}\,\square\,C_{2n-2}$ of order
$(2n-2)^2$, the construction used at image scale in
\cite{fokam-p3,fokam-p4}; and the length-$\ell$ genomic mutation space
$K_4^{\square\ell}=\Cay(\Z_4^\ell,S)$ embeds with $\nu=4^\ell=|V|$
(excursion ratio $1$ at every scale), a large non-grid host on which the
induced Fourier analysis is exact and canonical \cite{fokam-p3}.
\end{corollary}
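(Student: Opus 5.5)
We would derive every clause from Proposition~\ref{prop:product}, reading $\nu(G)$ as the minimum host order $|\Gamma|$ over isometric embeddings of $G$ into abelian Cayley graphs, and $k_{\min}(G)$ as the minimum $k$ such that $G$ embeds isometrically into some $\Cay(\Z_2^k,S)$. Both minima exist by Theorem~\ref{thm:existence}.

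\textbf{Submultiplicativity.} For each $i$ we fix an optimal embedding $\phi_i\colon V(G_i)\to\Cay(\Gamma_i,S_i)$ with $|\Gamma_i|=\nu(G_i)$. Proposition~\ref{prop:product} then gives an isometric embedding $(\phi_1,\dots,\phi_d)$ of $G_1\,\square\,\cdots\,\square\,G_d$ into $\Cay(\prod_i\Gamma_i,S)$. This host has order $\prod_i\nu(G_i)$, which proves the first inequality. For the binary bound we instead choose $\Gamma_i=\Z_2^{k_{\min}(G_i)}$. The product group is $\Z_2^{\sum_i k_{\min}(G_i)}$, and the generating set $S$ of Proposition~\ref{prop:product} lies inside it and satisfies $S=-S$ automatically, since every element has order at most $2$. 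So the product embedding is binary of dimension $\sum_i k_{\min}(G_i)$.

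\textbf{Grid into torus.} We first embed $P_n$ (with $n\ge 3$) into $C_{2n-2}=\Cay(\Z_{2n-2},\{\pm1\})$ by sending the $j$-th vertex to $j$ for $0\le j\le n-1$. The cyclic distance between $i$ and $j$ is $\min(|i-j|,\,2n-2-|i-j|)$. Since $|i-j|\le n-1$, we have $2n-2-|i-j|\ge n-1\ge|i-j|$, so the minimum is $|i-j|=d_{P_n}(i,j)$, and the map is isometric. Proposition~\ref{prop:product} with $d=2$ then yields the isometric embedding of $P_n\,\square\,P_n$ into $C_{2n-2}\,\square\,C_{2n-2}$, of order $(2n-2)^2$. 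The degenerate case $n=2$ works too, with $C_2$ read as $\Cay(\Z_2,\{1\})$.

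\textbf{The space $K_4^{\square\ell}$.} We use $K_4=\Cay(\Z_4,\{1,2,3\})$, so the identity map of $K_4$ is an isometric embedding. The identification part of Proposition~\ref{prop:product} gives $K_4^{\square\ell}=\Cay(\Z_4^\ell,S)$, and its embedding part shows the identity is isometric, so $\nu\le 4^\ell$. Conversely, every isometric embedding is injective, so any host must have at least $|V|=4^\ell$ vertices, giving $\nu\ge 4^\ell$. Hence $\nu=4^\ell$, and the excursion ratio $\nu/|V|$ equals $1$.

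\textbf{Main obstacle.} The only delicate point is bookkeeping rather than mathematics. We must make sure $\nu$ and $k_{\min}$ are interpreted as these minima, which the paper defers to its companion. We must also check that the product generating set is symmetric and generates the product group. Both properties follow blockwise from those of the $S_i$: each block is symmetric, and the blocks together generate $\prod_i\Gamma_i$ because each $S_i$ generates $\Gamma_i$.
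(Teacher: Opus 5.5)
Your proposal is correct and follows the paper's route. The paper gives no separate proof and treats the corollary as an immediate consequence of Proposition~\ref{prop:product}; your argument is exactly that application, and your check that $j\mapsto j$ embeds $P_n$ isometrically in $C_{2n-2}$, together with the injectivity lower bound $\nu\ge|V|$ for $K_4^{\square\ell}$, supplies the details the paper leaves implicit.
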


\begin{remark}[A Graham--Winkler route, and two open questions]
\label{rem:gwroute}
Composed with the canonical isometric embedding of Graham and Winkler
\cite{graham-winkler1985}---which places any connected graph isometrically
in a Cartesian product of its quotient graphs---Proposition~\ref{prop:product}
yields an alternative pipeline: embed each canonical factor into an abelian
host, then take the product. Whether this route can beat the direct
quotient construction of Section~\ref{sec:quotient}, and whether the
bounds of Corollary~\ref{cor:product} are ever far from tight (is
$\nu$ multiplicative on products?), are natural open questions; the
certified value $\nu(P_4\,\square\,P_4)\le 36=\nu(P_4)^2$ from the
companion census \cite{fokam-p2} is consistent with equality but does not
prove it. A worked instance of the route is the Desargues graph, the
bipartite double cover of the Petersen graph and a partial cube: its
Graham--Winkler factors are quotients by $\theta^\ast$-cuts, none of which is a
path, cycle, or complete graph, and embedding each factor then multiplying
gives a compact abelian host---a concrete direction we leave to future work.
\end{remark}

\subsection{Worked examples}
\label{subsec:exhibit}

Table~\ref{tab:exhibit} collects three products, each with its host
\emph{certified isometric} by the exact check of
Section~\ref{sec:algorithm} (all four inter-factor claims were verified by
breadth-first search on the product host). They illustrate the three regimes
of Proposition~\ref{prop:product}: a product of non-cycle circulants that is
itself an abelian Cayley graph, hence optimal by
Theorem~\ref{thm:existence}'s equality companion \cite{fokam-p2}; a mixed
product combining the binary Petersen host of Example~\ref{ex:petersen} with
the cyclic host of $K_3$; and a product of two stars, whose factor host
$\Cay(\Z_6,\{1,3,5\})$ of order $6$ lies strictly below the binary star
dimension (a certified $\nu(K_{1,3})=6<2^{k_{\min}}=8$, an instance of the
compaction quantified in \cite{fokam-p2}).

\begin{table}[t]
\centering
\caption{Certified isometric embeddings of Cartesian products via
Proposition~\ref{prop:product}. Host orders marked $^{\ast}$ are optimal
($\nu$, by the abelian-Cayley equality of \cite{fokam-p2}); the others are
certified upper bounds, since $\nu$ of the Petersen factor is itself only
known to lie in $[11,16]$ \cite{fokam-p2}.}
\label{tab:exhibit}
\small
\begin{tabular}{lccc}
\toprule
product & $|V|$ & certified host (order) & vs.\ $2^{\,|V|-1}$\\
\midrule
$C_7(1,2)\,\square\,C_7(1,2)$ & $49$ & $\Z_7\times\Z_7$ ($49$)$^{\ast}$
 & $6\times10^{12}$\\
Petersen $\square\;K_3$ & $30$ & $\Z_2^4\times\Z_3$ ($48$) & $1\times10^{7}$\\
$K_{1,3}\,\square\,K_{1,3}$ & $16$ & $\Z_6\times\Z_6$ ($36$) & $9\times10^{2}$\\
\bottomrule
\end{tabular}
\end{table}

The first product is a $49$-vertex, $8$-regular abelian Cayley graph on which
no compaction is needed---the Product Rule reproduces the host exactly. The
second embeds a graph that is not itself a Cayley graph (Petersen) times a
cycle into a host of order $48$, seven orders of magnitude below the naive
binary bound. The third connects to the companion theory of stars: two claws,
each embedded in $\Cay(\Z_6,\{1,3,5\})$ rather than in $\Z_2^3$, product into
$\Z_6\times\Z_6$ of order $36$ rather than the $\Z_2^6$ of order $64$ that the
binary factors would give.

\section{The algorithm and its cost}
\label{sec:algorithm}

The construction is effective, and this section presents it in full: the
design, the pseudo-code, correctness, and a candid cost analysis.

\subsection{Design}
Algorithm~\ref{alg:embed} runs a \emph{portfolio} of initial oriented
partitions: structured constructors for cycles, paths, and complete graphs
(which propose the directed-cycle classes of
Remark~\ref{rem:matching}); a $\Phi$-guided four-cycle union--find; and a
chain-merge initializer filtered by the oriented $\Phi$-test of
Definition~\ref{def:Phi}. For each initial partition it computes the SNF
quotient (Theorem~\ref{thm:quotientZ}), searches folds in increasing host
order (Theorem~\ref{thm:sublattice}), and, when no fold verifies,
\emph{repairs} by peeling one edge off a largest class into a singleton and
re-quotienting. Since each repair strictly refines the partition, the loop
terminates at the all-singleton partition, whose binary quotient always
succeeds (Corollary~\ref{cor:naive}). Every candidate host is verified by
the exact check---all $\binom{n}{2}$ distances against a truncated BFS of
the candidate Cayley graph---so the returned embedding is certified
unconditionally. The portfolio design is motivated by
Remark~\ref{rem:phiposition}: $\varphi$- and $\Phi$-compatibility are
guides, not guarantees, so several starting points plus exact certification
are essential (the Pappus computation of Example~\ref{ex:pappus}, where only
$2$ of $15$ $\varphi$-compatible partitions are isometric, makes the point
concrete).

\begin{algorithm}[t]
\caption{Compact abelian embedding (portfolio + exact core + repair)}
\label{alg:embed}
\begin{algorithmic}[1]
\Require connected graph $G$
\Ensure certified isometric $\phi\colon V\to\Cay(\Gamma,S)$,
$\Gamma=\prod\Z_{N_i}$
\State compute all-pairs distances $D$ and $\diam(G)$
\State build initial oriented partitions (portfolio)
\ForAll{initial partitions $\mathcal{P}$}
  \While{true}
    \State SNF quotient $\to\Gamma_{\mathrm{univ}}$, generators, labels
      (Thm.~\ref{thm:quotientZ})
    \State fold search in increasing host order, each candidate checked
      exactly (Thm.~\ref{thm:sublattice})
    \If{verified} record $(\Gamma,S,\phi)$; \textbf{break}
    \EndIf
    \State \emph{repair:} peel one edge off a largest class into a singleton
    \If{all classes singleton} \textbf{break}
    \EndIf
  \EndWhile
\EndFor
\State binary terminal: finest-partition quotient
  (Thm.~\ref{thm:existence}), host $\le 2^{\,n-1}$
\State \Return smallest recorded verified host
\end{algorithmic}
\end{algorithm}

\begin{theorem}[Universality and termination]\label{thm:termination}
Algorithm~\ref{alg:embed} terminates on every connected graph and returns a
certified isometric embedding with $|\Gamma|\le 2^{\,n-1}$.
\end{theorem}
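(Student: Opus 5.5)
The argument splits into termination, certification of the output, and the size bound, and I would take them in that order.

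\emph{Termination.} The outer loop runs over a finite portfolio, since every constructor in the design produces finitely many initial partitions of the finite edge set. I would therefore bound only the inner while loop for a fixed $\mathcal{P}$. Each pass does three things: an SNF computation (finite by Theorem~\ref{thm:quotientZ}(ii)); a fold search; and, if nothing is verified, a repair. For the fold search to be a finite step, it has to be capped. I will read it, as the design section intends, as an enumeration of Hermite-normal-form sublattices of index up to $2^{n-1}$ (or any fixed finite cap). There are finitely many such sublattices for each index, and each candidate is checked by a BFS on a finite Cayley graph, so the search is finite. For the repair I use a potential function: the number of classes $t$. Peeling an edge off a class of size at least $2$ raises $t$ by exactly one, and $t\le m$. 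So after at most $m-t_0$ repairs every class is a singleton and the loop breaks. The final binary-terminal step is a single SNF or $\F_2$ computation, so the whole algorithm halts.

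\emph{Correctness.} Every recorded triple $(\Gamma,S,\phi)$ passes the exact check. That check compares all $\binom n2$ graph distances with the BFS distances in the finite host, and it also catches non-injective labels, since two distinct vertices sent to the same label have Cayley distance $0<d_G$. A passed check is therefore, by definition, an isometric embedding in the sense of Definition~\ref{def:isometric}. The point to justify is that the truncated BFS is exact. Truncating at depth $\diam(G)$ suffices because only equality with $d_G\le\diam(G)$ matters: any distance above the truncation depth already certifies a mismatch. Soundness of the check needs nothing beyond Theorem~\ref{thm:quotientZ}(iii) and the remark in Theorem~\ref{thm:sublattice} that a wraparound shortcut can only fail the check, never falsely pass it.

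\emph{The bound $|\Gamma|\le 2^{n-1}$.} The binary terminal step always records a verified host. By Corollary~\ref{cor:naive} and Lemma~\ref{lem:join2}, the finest-partition binary quotient is an isometric labeling into $\Z_2^{n-1}$ with $S$ the images of the edges. This host passes the exact check, so the set of recorded hosts is nonempty and the algorithm never returns empty-handed. Because the algorithm returns the smallest recorded host, its order is at most $2^{n-1}$.

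I expect the main obstacle to be the finiteness of the fold search when $\Gamma_{\mathrm{univ}}$ has a free part. As literally stated, ``increasing host order'' is unbounded, and Theorem~\ref{thm:fold} guarantees success only at large indices, which may exceed $2^{n-1}$. I would resolve this by making the cap explicit in the proof, searching only hosts of order at most $2^{n-1}$, and noting that larger ones could never be returned anyway because of the binary terminal. If instead an uncapped search is wanted, Theorem~\ref{thm:fold} can supply termination, provided the current partition's universal labeling is isometric; that requires an analogue of Lemma~\ref{lem:joinZ} beyond the finest partition, which we do not have. The capped reading is the one I would commit to.
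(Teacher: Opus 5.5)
Your proposal is correct and follows the same route as the paper's proof: strict refinement bounds the repair rounds by $m$, the finest-partition binary quotient always verifies (Corollary~\ref{cor:naive}), every returned host has passed the exact check, and returning the smallest recorded host gives $|\Gamma|\le 2^{\,n-1}$. Your explicit cap on the fold search adds rigor the paper's proof lacks, since it tacitly treats each round as finite and the cap $H$ appears only in Theorem~\ref{thm:complexity}. You are also right that the cap is needed: a fold can never repair a shortcut already present in $\Gamma_{\mathrm{univ}}$, and Theorem~\ref{thm:fold} rests on Lemma~\ref{lem:joinZ}, which covers only the finest partition.
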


\begin{proof}
Each repair round strictly refines the partition, so each initializer runs at
most $m$ rounds; the binary terminal succeeds by
Theorem~\ref{thm:existence}. The returned embedding passed the exact check,
which compares all $\binom{n}{2}$ distances against a truncated BFS of the
candidate Cayley graph, so certification is unconditional.
\end{proof}

\subsection{Cost}
\label{subsec:cost}

We now account for the cost, and we are explicit about where it is
polynomial and where it is not.

\begin{theorem}[Cost of one pipeline pass]\label{thm:complexity}
Let $H$ be the host-order cap of the fold search, $f$ the free rank of
$\Gamma_{\mathrm{univ}}$, $k$ the binary dimension at the binary terminal,
and $R\le m$ the number of repair rounds. One full pass of
Algorithm~\ref{alg:embed} costs
\[
O\Bigl(\underbrace{n(n+m)}_{\text{distances}}
+\underbrace{m^2}_{\varphi/\Phi\text{ tests}}
+\;R\cdot\bigl[\underbrace{\mathrm{poly}(c,t)}_{\text{SNF}}
+\underbrace{\mathcal{N}_f(H)\,\bigl(H|S|+n^2\bigr)}_{\text{fold search}}
\bigr]
+\underbrace{2^{k}\,|S|+n^2}_{\text{binary terminal check}}\Bigr),
\]
where $\mathcal{N}_f(H)=\sum_{i\le H}\sigma_{f-1}^{*}(i)$ is the number of
sublattices of $\Z^f$ of index at most $H$ (with
$\sigma_{f-1}^{*}(i)=\sum_{d_1d_2\cdots d_f=i}d_2 d_3^2\cdots d_f^{f-1}$
counting Hermite-normal-form bases of index $i$).
\end{theorem}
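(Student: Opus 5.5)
The bound is obtained by walking through Algorithm~\ref{alg:embed} line by line and charging each line to one term of the displayed sum. The preprocessing terms come first. All-pairs distances are computed by one breadth-first search per vertex, $n$ searches at $O(n+m)$ each, which gives $n(n+m)$. With the distance table stored, each $\varphi$-test (Definition~\ref{def:phi}) or oriented $\Phi$-test (Definition~\ref{def:Phi}) on a pair of edges reads a constant number of entries. There are $O(m^2)$ edge pairs, and each portfolio initializer (union--find on four-cycles, chain merge) does at most near-linear bookkeeping over these tests. This gives the $m^2$ term, treating the number of portfolio members as a constant.

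Next I charge one round of the inner loop. Building the signed matrix $A$ from a fixed cycle basis (computed once from a BFS tree) costs $O(ct)$. Its Smith normal form is computable in time polynomial in $c$, $t$ and the bit-length of the entries, and the entries of $A$ are bounded by cycle lengths, so the SNF step is $\mathrm{poly}(c,t)$. For the fold search, the candidates are Hermite-normal-form bases of sublattices of $\Z^f$ of index $i\le H$. The standard count of such bases is the displayed $\sigma^{*}_{f-1}(i)$: choose a diagonal $d_1\cdots d_f=i$, and in row $j$ each entry above the diagonal has $d_j$ choices. Summing over $i\le H$ gives $\mathcal{N}_f(H)$ candidates. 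For each candidate, appending its rows and re-running the SNF is again polynomial and is absorbed into the SNF term. The exact check then does a BFS of $\Cay(\Gamma_L,S)$, which has at most $H$ vertices and $H|S|$ arcs, so it costs $O(H|S|)$. Comparing the $\binom n2$ label distances against $D$ costs $O(n^2)$. One round therefore costs $\mathrm{poly}(c,t)+\mathcal{N}_f(H)(H|S|+n^2)$.

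For the number of rounds, each repair strictly refines the partition, as in the proof of Theorem~\ref{thm:termination}, so there are $R\le m$ rounds, which gives the factor $R$. The binary terminal builds the finest quotient of Corollary~\ref{cor:naive}; this is linear algebra and is again absorbed. It then certifies by a BFS on $\Cay(\Z_2^k,S)$, costing $2^k|S|$, plus the $n^2$ comparison. The final selection of the smallest host is negligible. Summing all of this gives the stated bound.

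The delicate point is the fold-search term. Two things must be justified there. First, the truncated BFS may need to cover the whole finite host, so $H|S|$ is the honest worst case and not an overestimate. Second, the per-candidate SNF recomputation must be placed in the $\mathrm{poly}(c,t)$ term, or else be shown to be dominated by the BFS term. My first attempt is to note that checking membership in $L$ modulo a triangular HNF basis can be done directly, in time polynomial in $f$, without a new SNF. That makes the per-candidate algebra cost absorbable into $H|S|$. Any residual polynomial factor is then covered by the $R\cdot\mathrm{poly}(c,t)$ term.
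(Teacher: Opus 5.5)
Your line-by-line charging is the paper's proof. It has the same pieces: $n$ breadth-first searches for the distance table, $O(1)$ lookups per edge pair for the $\varphi/\Phi$ tests, one SNF per repair round, and Hermite-normal-form enumeration of the fold candidates. Each candidate is charged a BFS over at most $H$ elements with $|S|$ generators plus the $\binom n2$ comparison, there are $R\le m$ rounds by strict refinement, and the binary terminal costs a $2^k|S|$ BFS. One aside on the count: your description ``in row $j$ each entry above the diagonal has $d_j$ choices'' gives the summand $\prod_j d_j^{f-j}$ rather than $d_2d_3^2\cdots d_f^{f-1}=\prod_j d_j^{j-1}$. Reversing the order of the diagonal shows the two sums over ordered factorizations of $i$ coincide, so $\sigma^*_{f-1}(i)$ is unaffected.

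The step that is wrong as written is the claim that re-running the SNF for each fold candidate ``is absorbed into the SNF term''. That term is charged once per round, but each round has $\mathcal{N}_f(H)$ candidates. A per-candidate SNF, which the literal description of folding in Section~\ref{sec:compact} (append the rows of $L$ and recompute the SNF) would suggest, adds $R\,\mathcal{N}_f(H)\,\mathrm{poly}(c,t)$, and that is not in the stated bound.

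Your closing fix is the right one. It is also what the paper's proof tacitly does when it charges a candidate only for the BFS and the pair comparison: one SNF per round to get free coordinates, labels and generators, then per candidate a reduction modulo the triangular HNF basis and a BFS on canonical representatives.

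However, the remaining per-candidate work is not ``covered by the $R\cdot\mathrm{poly}(c,t)$ term''. That work is a $\mathrm{poly}(f)$ reduction at each BFS step plus reducing the $n$ labels, and an additive per-round term cannot absorb anything multiplied by $\mathcal{N}_f(H)$. This cost disappears only under the unit-cost convention for group operations that the paper uses implicitly throughout; it likewise charges $2^k|S|$ for the binary BFS with no factor depending on $k$. Without that convention, the fold-search term picks up an extra $\mathrm{poly}(f,\log H)$ factor. Replace your last sentence with an explicit appeal to this convention and your argument matches the paper's.
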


\begin{proof}
Breadth-first search from every vertex gives the distance matrix in
$O(n(n+m))$. The $\varphi$ and $\Phi$ tests examine each of the
$\binom{m}{2}$ edge pairs in $O(1)$ time given the distance matrix. Each
repair round performs one Smith normal form on the $c\times t$ integer
cycle--class matrix, polynomial in its dimensions and entry sizes
\cite{schrijver1986}, and one fold search: the Hermite-normal-form bases of
index $i$ in $\Z^f$ are the upper-triangular integer matrices with diagonal
$d_1\cdots d_f=i$ and off-diagonal entries reduced modulo the diagonal,
counted by $\sigma^{*}_{f-1}(i)$ above; each candidate is checked by a BFS of
the folded Cayley graph truncated at depth $\diam(G)$, touching at most
$H$ group elements with $|S|$ generators each, followed by the
$\binom{n}{2}$-pair comparison. The binary terminal check is a BFS in
$\Cay(\Z_2^k,S)$ truncated at depth $\diam(G)$, which in the worst case
visits $\Theta(2^k)$ elements.
\end{proof}

\begin{remark}[Where the cost really lies]\label{rem:cost}
Three points deserve emphasis, because they bound what the algorithm can and
cannot promise. First, the \emph{certification} steps are the dominant
cost, and they are exponential in the worst case: the binary terminal check
is $\Theta(2^k)$ with $k$ up to $n-1$, and the fold search multiplies a
super-linear sublattice count $\mathcal{N}_f(H)$ (already
$\Theta(H^2)$ summed over indices for $f=2$, and growing rapidly with $f$)
by a per-candidate BFS. On the structured families that motivate the theory
(cycles, paths, grids, circulants, and the examples of
Section~\ref{sec:examples}) the quotient dimension $k$ and free rank $f$ are
small and the pipeline is fast in practice; on adversarial inputs it is not,
and we make no polynomial-time claim. Second, the certification cost is the
price of an unconditional guarantee: every returned host is exactly
verified, so the algorithm's outputs are theorems, not estimates. Third, we
do not know the complexity of the underlying optimization problem
(\emph{given $G$, compute $\nu(G)$ or $k_{\min}(G)$}); we conjecture it is
hard, but we have no reduction, and we state the question as open in
Section~\ref{sec:conclusion}.
\end{remark}

\section{Worked examples}
\label{sec:examples}

All numerical claims in this section were produced and re-verified by the
certified pipeline of Section~\ref{sec:algorithm}; the $\varphi$-classes and
parity matrices are displayed so that every computation can be reproduced by
hand or by machine.

\begin{example}[Triangle]\label{ex:triangle}
$K_3$ is not a partial cube. Its single independent cycle, traversed
cyclically with all three edges in one class oriented consistently, gives the
relation $3g=0$; by Theorem~\ref{thm:quotientZ} the universal group is
$\Z_3$ and the host is $\Cay(\Z_3,\{1,2\})=K_3$ itself, order $3$, isometric.
The matching constraint would have forced a binary host of order $4$
(Remark~\ref{rem:matching}).
\end{example}

\begin{example}[Petersen graph]\label{ex:petersen}
The Petersen graph has $n=10$, $m=15$, $c=6$. Computing the relation
$\varphi$ on all $\binom{15}{2}=105$ edge pairs yields exactly five classes
of size $3$ (Figure~\ref{fig:petersen}); with the standard vertex numbering
(outer $5$-cycle $0\ldots4$, inner pentagram $5\ldots9$, spokes $i\sim i+5$)
they are the five parallel matchings
\[
\begin{aligned}
F_1&=\{01,\,38,\,79\}, & F_2&=\{04,\,27,\,68\}, & F_3&=\{05,\,23,\,69\},\\
F_4&=\{12,\,49,\,58\}, & F_5&=\{16,\,34,\,57\}.
\end{aligned}
\]
So $t=5$, and on the standard cycle basis the $6\times 5$ cycle--class
parity matrix is
\[
A=\begin{pmatrix}
1&1&1&1&1\\ 1&1&1&1&1\\ 0&0&0&0&0\\ 1&1&1&1&1\\ 1&1&1&1&1\\ 0&0&0&0&0
\end{pmatrix},
\]
visibly of rank $\rho=1$: the unique relation is $g_1+\cdots+g_5=0$. Hence
$k=5-1=4$, with generators $e_1,e_2,e_3,e_4$ and the composite
$e_1+e_2+e_3+e_4=(1,1,1,1)$. The host is the Clebsch graph
$\Cay(\Z_2^4,\{e_1,e_2,e_3,e_4,(1,1,1,1)\})$ of order $16$, and the
embedding is certified isometric on all $\binom{10}{2}=45$ pairs. The
companion paper \cite{fokam-p2} shows $k=4=\lceil\log_2 10\rceil$ is
optimal. (The Petersen graph is vertex-transitive but not a Cayley graph
\cite{mckay-praeger1994}, so it cannot be its own host; the order-$16$
Clebsch host is remarkably close to the injectivity floor.)
\end{example}

\begin{figure}[t]
\centering
\includegraphics[width=.55\linewidth]{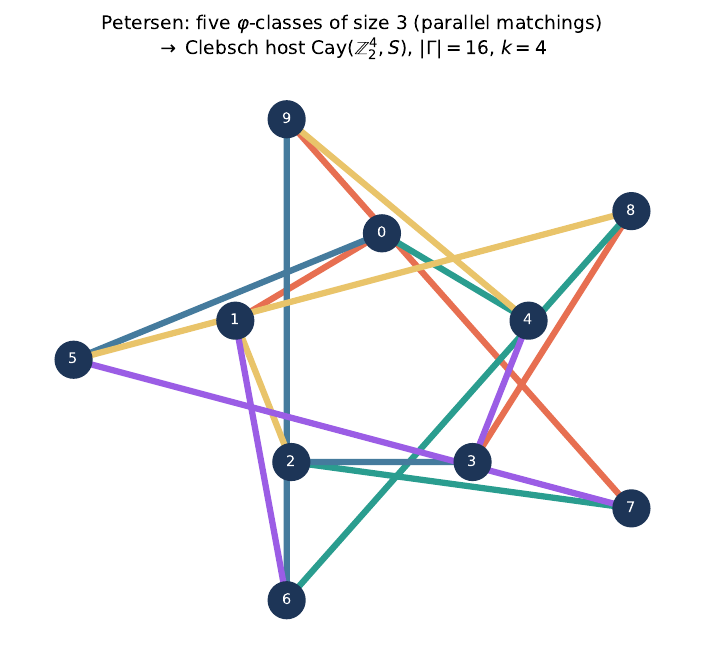}
\caption{The Petersen graph has five $\varphi$-classes of size three (the
parallel matchings, one per color). The cycle--class parity matrix has rank
$1$, so the binary quotient has dimension $k=4$: a certified isometric
embedding into the Clebsch graph $\Cay(\Z_2^4,S)$ of order $16$, with one
weight-four composite generator.}
\label{fig:petersen}
\end{figure}

\begin{example}[Pappus graph]\label{ex:pappus}
The Pappus graph ($n=18$, $m=27$, $c=10$; LCF notation $[5,7,-7,7,-7,-5]^3$)
admits exactly $27$ triples of pairwise $\varphi$-related edges, and exactly
$15$ partitions of $E$ into nine such triples. Of these $15$, precisely
\emph{two} yield isometric quotients---a concrete demonstration that
$\varphi$-compatibility does not imply embeddability
(Remark~\ref{rem:phiposition}) and that exact certification is
indispensable. One of the two (with vertices numbered along the LCF
Hamiltonian cycle) is
\[
\begin{aligned}
F_1&=\{(0,1),(6,7),(12,13)\}, & F_2&=\{(2,3),(8,9),(14,15)\},\\
F_3&=\{(0,5),(7,8),(15,16)\}, & F_4&=\{(3,4),(6,11),(13,14)\},\\
F_5&=\{(1,2),(9,10),(12,17)\}, & F_6&=\{(4,5),(10,11),(16,17)\},\\
F_7&=\{(0,17),(2,13),(4,15)\}, & F_8&=\{(1,8),(3,10),(5,6)\},\\
F_9&=\{(7,14),(9,16),(11,12)\},
\end{aligned}
\]
with $10\times 9$ cycle--class parity matrix (on the
\texttt{networkx} cycle basis)
\[
A=\begin{pmatrix}
1&1&1&1&1&1&0&0&0\\
0&0&0&0&0&0&0&0&0\\
0&0&1&1&1&0&1&1&1\\
1&1&0&0&0&1&1&1&1\\
1&1&0&0&0&1&1&1&1\\
0&0&0&0&0&0&0&0&0\\
0&0&0&0&0&0&0&0&0\\
1&1&1&1&1&1&0&0&0\\
0&0&1&1&1&0&1&1&1\\
0&0&1&1&1&0&1&1&1
\end{pmatrix},
\]
whose distinct nonzero rows are three, pairwise summing to each other, so
$\rho=2$ and $k=9-2=7$. The generator set has $|S|=9$: seven basis
generators and two composite generators, both of Hamming weight $5$
(namely $(1,0,0,1,1,1,1)$ and $(0,1,1,0,1,1,1)$ in the quotient
coordinates). The host has order $2^7=128$, against $2^{17}=131072$ for the
naive embedding---a $1024\times$ improvement---and isometry is certified on
all $\binom{18}{2}=153$ pairs.
\end{example}

\begin{example}[Diamond]\label{ex:diamond}
The diamond ($K_4$ minus an edge) illustrates the non-diagonal fold of
Theorem~\ref{thm:sublattice}, whose proof reports the full verified data:
universal labels $(0,0),(1,0),(1,1),(2,1)$ in $\Z^2$; all four diagonal
folds of index $6$ fail; the sublattice $\langle(3,0),(1,2)\rangle$ yields
the octahedron $\Cay(\Z_2\times\Z_3,\{\pm(0,1),\pm(1,1)\})$ of order $6$
with labels $(0,0),(0,2),(1,1),(1,0)$, certified isometric
(Figure~\ref{fig:diamond}). The order $6$ is optimal: the companion paper's
order bound gives $\nu\ge\max(n,2\,\diam)=\max(4,4)=4$, and an exhaustive
search over the three abelian groups of orders $4$ and $5$ confirms no
smaller isometric host exists.
\end{example}

\section{Concluding remarks}
\label{sec:conclusion}

We have given a uniform construction embedding any finite connected graph
isometrically into a Cayley graph of a finite abelian group, organized around
a single quotient labeling theorem whose binary and general forms are linked
by reduction modulo $2$. The relation $\varphi$ generalizes the
Djokovi\'c--Winkler relation beyond partial cubes while coinciding with it on
them; the partial-permutation constraint corrects the matching constraint of
the hypercube setting; the Smith normal form computes the most generic
consistent host exactly; and compactification is a further instance of the
same construction, with non-diagonal sublattices sometimes necessary. The
algorithmic pipeline certifies every output, at a certification cost that is
exponential in the worst case (Remark~\ref{rem:cost}).

Beyond the two open directions inherited from the examples---the exact
minimal order of the Petersen graph, and the complexity of computing
$\nu(G)$ or $k_{\min}(G)$---we single out the comparison with the
Graham--Winkler canonical embedding \cite{graham-winkler1985} sketched in the
introduction as the most natural structural question raised by this work.
Sharp lower bounds, exact dimensions for stars and odd cycles, and an
exhaustive census of all $995$ connected graphs on at most seven vertices
are developed in the companion paper \cite{fokam-p2}; the Fourier and
wavelet analysis supported by these embeddings is developed in two further
companion papers \cite{fokam-p3,fokam-p4}.

\section*{Data and code availability}
The verification pipeline and the scripts reproducing every computation in
Section~\ref{sec:examples} are available from the authors and will be
archived with the final version.

\section*{Declaration on the use of AI}
An AI assistant (Anthropic's Claude) was used for software development and
debugging of the verification pipeline and for language editing. All
mathematical content, directions, and conclusions are the authors' own; all
computational claims were verified by the certified pipeline of
Section~\ref{sec:algorithm}.

\section*{Acknowledgments}
The authors thank Solutum Engineering for internet and computing support,
and a referee of an earlier version for observations that led to
Remark~\ref{rem:gapfix} and to substantial expository improvements.

\end{document}